\newcommand{\K}{\mathbb{K}}
\newcommand{\pol}{\mathbb{K}[x_1,\ldots , x_n]}
\newcommand{\Hom}{\mathrm{Hom}}
\newcommand{\Ind}{\mathrm{Ind}}
\newcommand{\Res}{\mathrm{Res}}
\newcommand{\tprod}{\textstyle\prod}
\newcommand{\dd}[2][]{\frac{\partial #1}{\partial #2}} 
\newcommand{\bb}[1]{\mathbb{#1}}
\newcommand{\fr}[1]{\mathfrak{#1}}
\newcommand{\ca}[1]{\mathcal{#1}}
\newcommand{\ol}[1]{\overline{#1}}
\newcommand{\ra}{\rightarrow}
\newcommand{\ddx}{\frac{\partial}{\partial x}}
\newtheorem{lemma}{Lemma}
\newtheorem{prop}[lemma]{Proposition}
\newtheorem{cor}[lemma]{Corollary}
\newtheorem{thm}[lemma]{Theorem}
\newenvironment{example}[1][Example]{\begin{trivlist}
\item[\hskip \labelsep {\bfseries #1}]}{\end{trivlist}}
\begin{document}

\title[Simple $\mathfrak{sl}(V)$-modules which are free over an abelian subalgebra]{Simple $\mathfrak{sl}(V)$-modules which are free over an abelian subalgebra}

\author{Jonathan Nilsson}
\address{Mathematical Sciences, Chalmers University of Technology, Sweden}
\email{jonathn@chalmers.se}

\date{}

\begin{abstract}
\noindent Let $\mathfrak{p}$ be a parabolic subalgebra of $\mathfrak{sl}(V)$ of maximal dimension and let $\mathfrak{n} \subset \mathfrak{p}$ be the corresponding nilradical. In this paper we classify the set of $\mathfrak{sl}(V)$-modules whose restriction to $U(\mathfrak{n})$ is free of rank $1$. It turns out that isomorphism classes of such modules are parametrized by polynomials in $\dim V-1$ variables. We determine the submodule structure for these modules and we show that they generically are simple.
\end{abstract}

\maketitle

\section{Introduction}
Lie algebras and their representations appear throughout multiple areas of mathematics, and the elemental objects of representation theory are simple modules. Unfortunately, a complete classification of simple modules for a Lie algebra $\fr{g}$ is too broad a project, only for the Lie algebra $\fr{sl}_2$ does a version of such a classification exist, see~\cite{B,Maz1}. Nevertheless many classes of $\fr{g}$-modules are well studied. For example, when $\fr{g}$ is a simple finite-dimensional complex Lie algebra, all simple finite-dimensional modules were classified early, see \cite{Ca,Di}. More generally, simple highest weight modules (see \cite{Di,Hu,BGG}) and
simple weight modules with finite-dimensional weight spaces (see \cite{BL, Fu, Fe, Mat}) are also completely classified.

Several classes of non-weight modules have also been studied. These include Whittaker modules (see \cite{Kos,BM}), Gelfand-Zetlin modules (see \cite{DFO}), and various others (see for example ~\cite{FOS}).

Recently several authors have studied $\fr{g}$-modules whose restriction to certain $\fr{g}$-subalgebras are free. For example, when $\fr{g}$ is a simple complex finite-dimensional Lie algebra, the set of modules which are free of rank $1$ when restricted to the universal enveloping algebra of a Cartan subalgebra were classified in~\cite{N1,N2}. Corresponding and related results were also obtained for a multitude of other Lie algebras such as the Witt- and Virasoro-algebras, the Heisenberg-Virasoro algebra, Schrödinger algebras, and for basic Lie-super algebras, see 
\cite{CC,CG,CLNZ,CZ,CTZ,HCS,LZ,MP,N3,TZ1,TZ2} and references therein. A common theme for many of the modules in the papers listed above are that they are free when restricted to some commutative $\fr{g}$-subalgebra, often involving a Cartan subalgebra and central elements of $\fr{g}$. 

In the present paper we study $\fr{sl}(V)$-modules which are free over another maximal commutative subalgebra: the nilradical of a parabolic subalgebra of maximal dimension. A concrete example of such a module is given in the following result which is a restatement of Theorem~\ref{mainthm} in Section~\ref{sec:class}.

Let $\fr{g}=\fr{sl}_{n+1}$ and let $\fr{n}=\mathrm{span}(e_{1,n+1}, \ldots, e_{n,n+1})\subset \fr{g}$ (where as usual $e_{i,j}$ are the standard basis elements for $\fr{gl}_{n+1}$). Then $\fr{n}$ is the nilradical of the parabolic subalgebra corresponding to removing the simple root whose root space is spanned by $e_{n+1,n}$.
\begin{thm}
\label{introthm}
Fix a polynomial $p({\bf x})\in \pol$ and for $1\leq i,j \leq n$ define polynomials
 \[ p_{ij}:=x_i\frac{\partial p}{\partial x_j}+\delta_{ij}p({\bf 0})/n \quad \text{ and } \quad q_i:=-\frac{1}{x_i} \int_{0}^{x_i} \sum_{r=1}^n (p_{ii}^rp_{ri}+x_rp_{ii}^{ir}+p_{ii}^{i}p_{rr})dx_i,\]
 where upper indices indicate derivatives: $p^k=\tfrac{\partial p}{\partial x_k}$.

Then the following action equips the space $M(p)=\pol$ with an $\fr{sl}_{n+1}$-module structure:
\begin{align*}
e_{i,n+1}\cdot f&=x_if\\
h_i\cdot f&=fp_{ii} + x_i\tfrac{\partial f}{\partial x_i}\\
e_{ij}\cdot f&=fp_{ij}+x_i\tfrac{\partial f}{\partial x_j}\\
e_{n+1,i}\cdot f&=q_if-\sum_r(p_{ri}\tfrac{\partial f}{\partial x_r}+p_{rr}\tfrac{\partial f}{\partial x_i}+x_r\tfrac{\partial^2 f}{\partial x_i\partial x_r})
\end{align*}
for $f\in \pol$,  $1\leq i,j \leq n$, and $h_i:=e_{ii}-\frac{1}{n+1}I$.

Moreover, any $\fr{sl}_{n+1}$-module $M$ for which $\Res_{U(\fr{n})}^{U(\fr{sl}_{n+1})}M$ is free of rank $1$ is isomorphic to $M(p)$ for a unique $p\in \pol$.
\end{thm}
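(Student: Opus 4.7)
\emph{Overall strategy and existence of the action.} The theorem has two halves---that the formulas define a module, and that every rank-one free $U(\fr{n})$-module arises uniquely in this way---and I would treat them in parallel, using the Chevalley basis $\{e_{i,n+1},\; h_i,\; e_{ij}\;(i\neq j,\; i,j\le n),\; e_{n+1,i}\}$ of $\fr{sl}_{n+1}$ as the organising framework. To verify that $M(p)$ is a module I would check the commutator identity $[X,Y]\cdot f = X\cdot Y\cdot f - Y\cdot X\cdot f$ for each pair of basis elements. The relations among the $e_{i,n+1}$ hold trivially because $\fr{n}$ is abelian and they act as commuting multiplications $x_i$. Brackets of $h_i$ and $e_{ij}$ with the $e_{k,n+1}$ are first-order identities read off directly from $p_{ij}=x_i p^j+\delta_{ij}p({\bf 0})/n$, and symmetry of mixed partials reduces the brackets among the $h_i$ and $e_{ij}$ themselves to routine polynomial identities. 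The substantive check is $[e_{n+1,i},e_{j,n+1}]=\delta_{ij}(e_{n+1,n+1}-e_{ii})$: expanded as an identity of second-order operators on $f$, the multiplication-coefficient part becomes a first-order ODE in $x_i$ whose unique polynomial solution is the stated $q_i$, well-defined because the relevant integrand has no constant term in $x_i$.

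\emph{Classification.} Conversely, let $M$ be free of rank $1$ over $U(\fr{n})$ with generator $v$. Since $\fr{n}$ is abelian, $U(\fr{n})\cong\pol$ via $e_{i,n+1}\mapsto x_i$, and under this identification $M\cong\pol$ with $e_{i,n+1}$ acting as multiplication by $x_i$. Each other $X\in\fr{sl}_{n+1}$ acts as a $\K$-linear operator $\pi(X)$ satisfying $[\pi(X),x_i]=\pi([X,e_{i,n+1}])$, and iterating this relation inductively on the nilpotent height of $X$ forces $\pi(X)$ to be a polynomial differential operator of bounded order. Writing $\pi(h_i)=g_i+x_i\del/\del x_i$, commutativity $[\pi(h_i),\pi(h_j)]=0$ gives $x_j\del g_i/\del x_j=x_i\del g_j/\del x_i$; expanding in monomials, this is equivalent to the existence of a polynomial $p\in\pol$ with $x_i\del p/\del x_i$ equal to the non-constant part of $g_i$ for every $i$, so $g_i=x_i\del p/\del x_i+c_i$ for scalars $c_i$. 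Invoking an off-diagonal relation such as $[e_{ij},e_{ji}]=h_i-h_j$ (after running the same analysis for $\pi(e_{ij})$) pins these scalars to the common value $p({\bf 0})/n$, so $g_i=p_{ii}$. The identical scheme then produces the formula for $\pi(e_{ij})$; finally $\pi(e_{n+1,i})$ is second-order, its derivative part is fixed by commutation with the $x_j$, and its multiplication part is forced to be $q_i$ by $[\pi(e_{n+1,i}),\pi(e_{i,n+1})]=\pi(e_{n+1,n+1}-e_{ii})$ exactly as in the existence step. Hence $M\cong M(p)$.

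\emph{Uniqueness and main obstacle.} Uniqueness is read off the reconstruction: from any module structure the operator $h_i\cdot 1$ equals $p_{ii}$, and $p$ is recovered from $\{p_{ii}\}_{i=1}^n$ since for each multi-index $\alpha\neq 0$ the $x^\alpha$-coefficient of $p$ equals the $x^\alpha$-coefficient of $p_{ii}$ divided by $\alpha_i$ (for any $i$ with $\alpha_i\neq 0$), while the constant term of $p$ is $n$ times the constant term of any $p_{ii}$. The principal technical obstacle is the analysis of $\pi(e_{n+1,i})$: both showing that the explicit $q_i$ satisfies every relation, and extracting it uniquely from an abstract module, require careful manipulation of second-order differential expressions in $p$, and the existence of $q_i$ as a polynomial relies precisely on the integrability condition (vanishing of the $x_i$-constant part of the integrand) that the integral formula in the theorem encodes.
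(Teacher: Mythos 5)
Your overall architecture (pairwise verification of the bracket relations for existence; reconstruction of the action from its values on the generator $1$ for the classification) matches the paper's, and the classification half of your sketch is essentially sound. However, there is a concrete error at precisely the point you single out as ``the substantive check.'' The bracket $[e_{n+1,i},e_{j,n+1}]=\delta_{ij}e_{n+1,n+1}-e_{ji}$ does \emph{not} determine $q_i$ and produces no ODE at all: writing the action of $e_{n+1,i}$ as $q_i+D_i$ with $D_i:=-\sum_r(p_{ri}\del_r+p_{rr}\del_i+x_r\del_i\del_r)$, the multiplication term $q_i$ commutes with multiplication by $x_j$, so the commutator of the two operators is $[D_i,x_j]=-\bigl(p_{ji}+\delta_{ij}\ol{p}+\delta_{ij}d+x_j\del_i\bigr)$, which equals the action of $-e_{ji}$ when $i\neq j$ and of $e_{n+1,n+1}-e_{ii}=-(\ol{h}+h_i)$ when $i=j$, identically and for \emph{any} polynomial $q_i$. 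The relation that actually forces $q_i$ is $[h_i,e_{n+1,i}]=-e_{n+1,i}$: comparing the zeroth-order (multiplication) parts of both sides yields $x_iq_i^i+q_i+\sum_r(p_{ii}^rp_{ri}+x_rp_{ii}^{ir}+p_{ii}^{i}p_{rr})=0$, i.e.\ $\tfrac{\del}{\del x_i}(x_iq_i)=-\sum_r(\cdots)$, whose unique polynomial solution is the stated integral. The same misattribution recurs in your classification step, where you again invoke $[\pi(e_{n+1,i}),\pi(e_{i,n+1})]$ to pin down the multiplication part of $\pi(e_{n+1,i})$; carried out as written, that step would yield no constraint and the uniqueness of $q_i$ would be unproved.

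A second, consequent gap: because you locate all the difficulty in a bracket that holds automatically, your sketch gives no account of the verifications that are genuinely substantive, namely $[h_k,e_{n+1,i}]$ for $k\neq i$, $[e_{jk},e_{n+1,i}]$, and $[e_{n+1,i},e_{n+1,j}]$. These are the identities in which the specific forms $p_{ij}=x_ip^j+\delta_{ij}p({\bf 0})/n$ and the integral formula for $q_i$ must be used in concert; the paper reduces them to explicit polynomial identities in the derivatives of $p$ and still must carry out a long computation to confirm them. As proposed, your argument would carefully check a family of relations that are satisfied for any choice of $q_i$, while leaving unexamined exactly the relations that would fail for a wrong choice of $q_i$ or of the off-diagonal $p_{ij}$.
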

The easiest case is when $p$ is constant as in the following example:

\begin{example}Taking $n=2$ and $p=-\tfrac{3}{2}\lambda \in \K$ we obtain the following $\fr{sl}_3$-module structure on $\K[x_1,x_2]$:
\begin{align*}
e_{13} \cdot f&=x_1f  & e_{23} \cdot f&=x_2f\\
h_1 \cdot f &= x_1\tfrac{\partial f}{\partial x_1} &  h_2 \cdot f &= x_2\tfrac{\partial f}{\partial x_2}\\
e_{12} \cdot f&=x_1\tfrac{\partial f}{\partial x_2} & e_{21} \cdot f&=x_2\tfrac{\partial f}{\partial x_1}\\
e_{31}\cdot f&=\lambda\tfrac{\partial f}{\partial x_1}-d(\tfrac{\partial f}{\partial x_1})\\
e_{32}\cdot f&=\lambda\tfrac{\partial f}{\partial x_2}-d(\tfrac{\partial f}{\partial x_2})
\end{align*}
where we have written $d$ for the degree operator $x_1\tfrac{\partial}{\partial x_1}+x_2\tfrac{\partial}{\partial x_2}$. 
In this case our module $M(-\tfrac{3\lambda}{2})$ is actually parabolically induced: Let $\fr{p}=\mathrm{span}(h_1,h_2,e_{12},e_{21},e_{31},e_{32})$, and let $\bb{K}_{\lambda}$ be the $1$-dimensional $\fr{p}$-module where $h_1$ and $h_2$ both act by $\lambda$, and the other basis elements of $\fr{p}$ act trivially. Then $\Ind_{U(\fr{p})}^{U(\fr{sl}_3)} \K_{\lambda} = U(\fr{sl}_3) \otimes_{U(\fr{p})} \K_{\lambda} \simeq M(-\tfrac{3\lambda}{2})$. However, when $p$ is nonconstant $M(p)$ is not parabolically induced. 
\end{example}

The layout of this paper is as follows. In Section~\ref{sec:pre} we discuss parabolic subalgebras of $\fr{sl}(V)$ and their nilradicals, and we look at some general theory for modules free over subalgebras. In Section~\ref{sec:sl2} we focus on $\fr{g}=\fr{sl}_2$, in this case we get somewhat nicer formulas for our module structure. We determine the Jordan-Hölder components of the modules we construct, and in Section~\ref{sec:tensor} we give a Glebsch-Gordan style decomposition theorem for tensor products of $U(\fr{n})$-free modules and finite-dimensional modules.
In Section~\ref{sec:sln} we generalize most of these results to $\fr{sl}_{n+1}$. In Section~\ref{sec:class} we obtain the classification of $U(\fr{n})$-free modules of rank $1$ for $\fr{sl}(V)$ and in Section~\ref{sec:sub} we prove that our modules are irreducible in general, and determine the submodule structure for the exceptional cases.

\section{Preliminaries}
\label{sec:pre}
Denote the nonnegative integers by $\bb{N}$, and let $\K$ be an algebraically closed field of characteristic zero. 

\subsection{Modules which are free over a subalgebra}
We first discuss some general results relating the modules we study to previously known modules.

In this section let $\fr{g}$ be an arbitrary Lie algebra over $\K$ and let $\fr{a} \subset \fr{g}$ be a subalgebra.

First let us briefly recall that we have an adjunction between the functors $\Res_{U(\fr{a})}^{U(\fr{g})}: U(\fr{g})\text{-Mod} \ra U(\fr{a})\text{-Mod}$ and 
$\Hom_{U(\fr{a})}(U(\fr{g}),-): U(\fr{a})\text{-Mod} \ra U(\fr{g})\text{-Mod}$. In particular this means that for every $\fr{g}$-module $M$  and any $\fr{a}$-module $N$ we have a natural vector space isomorphism 
\[\Hom_{U(\fr{a})}(\Res_{U(\fr{a})}^{U(\fr{g})}M,N) \simeq \Hom_{U(\fr{g})}\big(M,\Hom_{U(\fr{a})}(U(\fr{g}),N)\big).\]
Here the $\fr{g}$-action on $\Hom_{U(\fr{a})}(U(\fr{g}),N)$ is given by $(x\cdot f)(y):=f(yx)$, and the correspondence above maps $\varphi \in \Hom_{U(\fr{a})}(\Res_{U(\fr{a})}^{U(\fr{g})}M,N)$ to $\ol{\varphi} \in \Hom_{U(\fr{g})}\big(M,\Hom_{U(\fr{a})}(U(\fr{g}),N)\big)$, where $\ol{\varphi}(m)\in \Hom_{U(\fr{a})}(U(\fr{g}),N)$ is defined by $\ol{\varphi}(m)(x):=\varphi(x\cdot m)$.

Now let $\fr{g}$ be a finite-dimensional complex simple Lie algebra, and let $\fr{a}\subset \fr{g}$ be a subalgebra. Let $M$ be a $\fr{g}$-module such that $\Res_{U(\fr{a})}^{U(\fr{g})}M$ is a free module of rank $1$. This just means that $M\simeq U(\fr{a})$ as an $\fr{a}$-module. 
As vector spaces we then have 
\[\Hom_{U(\fr{a})}(\Res_{U(\fr{a})}^{U(\fr{g})}M,N)\simeq\Hom_{U(\fr{a})}(U(\fr{a}),N)\simeq N,\] so the $\Res-\Hom$ adjunction above gives \[\dim \Hom_{U(\fr{g})}\big(M,\Hom_{U(\fr{a})}(U(\fr{g}),N)\big) = \dim N.\]

For example, 
take $\fr{a}=\mathrm{span}(z)$ for some $z\in \fr{g}$, and take and take $N=\K_{\alpha}$ to be the one-dimensional $\fr{a}$-module where $z$ acts by the scalar $\alpha$.
Then the space $\Hom_{U(\fr{a})}(U(\fr{a}),\K_{\alpha})=\Hom_{\K[z]}(\K[z],\K_{\alpha})$ is one-dimensional and spanned by the evaluation map $\varphi_{\alpha}$ where $\varphi_{\alpha}(f(z))=f(\alpha)$.
Corresponding to $\varphi_{\alpha}$ we get a $\fr{g}$-submodule $\mathrm{Ker}(\ol{\varphi}_{\alpha}) \subset M$. We can describe this kernel explicitly: Let $f(z)\in \K[z]=U(\fr{a})$. Then 
\[f\in\mathrm{Ker}(\ol{\varphi}_{\alpha}) \Leftrightarrow \ol{\varphi}_{\alpha}(f(z))=0 \Leftrightarrow \ol{\varphi}_{\alpha}(f(z))(x)=0 \; \forall x\in U(\fr{g})\]
\[\Leftrightarrow \varphi_{\alpha}(x \cdot f(z))=0 \; \forall x\in U(\fr{g}) \Leftrightarrow (x \cdot f)(\alpha)=0 \; \forall x\in U(\fr{g}).\]

As an example we may take $\fr{a}$ to be a Cartan-subalgebra of $\fr{g}$. Then the situation becomes as in the papers~\cite{N1,N2}, where we studied simple module structures on $U(\fr{h})$. In particular, if we take $\fr{g}=\fr{sl}_2$ and pick a basis $\{x,y,h\}$ satisfying $[h,x]=x$, $[h,y]=-y$ and $[x,y]=2h$, for any scalar $b$ we have a module structure on $M_b=U(\fr{h})$ in which \[h\cdot f(h)=hf(h), x\cdot f(h)=(h+b)f(h-1), y\cdot f(h)=-(h-b)f(h+1).\]
As above, $\mathrm{Ker}(\ol{\varphi}_{\alpha})$ is a submodule for each $\alpha$, and the conditions from~\cite{N1} for $M_b$ to be simple correspond precisely to our derived condition $\mathrm{Ker} \ol{\varphi}_{\alpha} = M_b$ as above.

If we stick with $\fr{sl}_2$, another option is to instead take $\fr{a}=\mathrm{span}(x)$ and study modules free over $U(\fr{a})$. This is what we do in Section~\ref{sec:sl2} below.

We can generalize these results to construct a new type of modules for $\fr{sl}_n$. Here instead of taking $\fr{a}$ as the Cartan subalgebra, we pick $\fr{a}$ as another abelian subalgebra of dimension $n-1$, namely the nilradical of a parabolic subalgebra of maximal dimension. This is discussed starting from Section~\ref{sec:sln}.

\subsection{Nilradicals of maximal parabolics}
We first describe the set of parabolic subalgebras of $\fr{sl}(V)$ of maximal dimension.
Given a proper nontrivial subspace $\Delta \subset V$ we define subalgebras of $\fr{sl}(V)$ as follows:
\[\fr{p}_{\Delta}:=\mathrm{Stab}(\Delta)=\{f\in \fr{sl}(V) \;|\; f(\Delta)\subset \Delta \},\]
\[\fr{n}_{\Delta}:=\{f\in \fr{sl}(V) \;|\; f(V)\subset \Delta \}.\]
We summarize some classical results on such subalgebras, see~\cite[Lemma 7.3.1]{Kob} for details.
\begin{lemma}
We have
\begin{enumerate}
	\item $\fr{p}_{\Delta}$ is a parabolic subalgebra of $\fr{sl}(V)$.
	\item $\fr{p}_\Delta$ is maximal with respect to inclusion: it is not contained in any other parabolic subalgebra.
	\item $\fr{n}_{\Delta}$ is the nilradical of $\fr{p}_\Delta$.
	\item $\fr{n}_\Delta$ is an abelian subalgebra.
	\item $(\fr{p}_\Delta)^\perp = \fr{n}_\Delta$ with respect to the Killing form on $\fr{sl}(V)$.
	\item $\fr{n}_\Delta$ is an ideal of $\fr{p}_\Delta$ and $\fr{p}_\Delta/\fr{n}_\Delta$ is semi-simple.
\end{enumerate}
\end{lemma}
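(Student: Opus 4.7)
The plan is to choose a basis of $V$ adapted to $\Delta$ and reduce all six assertions to block-matrix bookkeeping. Fix $k=\dim \Delta$ and an ordered basis $v_1,\dots,v_{n+1}$ of $V$ with $v_1,\dots,v_k$ spanning $\Delta$. In this basis every endomorphism $X\in \fr{gl}(V)$ acquires a $2\times 2$ block form with blocks $X_{ij}$ of the appropriate sizes, and reading off the definitions one sees that $X\in \fr{p}_\Delta$ iff $X_{21}=0$ and $\mathrm{tr}(X_{11})+\mathrm{tr}(X_{22})=0$, whereas $X\in \fr{n}_\Delta$ iff $X_{11}$, $X_{21}$ and $X_{22}$ all vanish. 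After this setup, items (1), (4) and (6) are essentially immediate: (1) because $\fr{p}_\Delta$ visibly contains the upper-triangular Borel subalgebra of $\fr{sl}(V)$ in this basis; (4) because the product of any two elements of $\fr{n}_\Delta$ is zero, so brackets vanish; (6) because a direct block multiplication shows that $[Y,X]$ has only the $(1,2)$-block nonzero whenever $Y\in \fr{p}_\Delta$ and $X\in \fr{n}_\Delta$, and the quotient $\fr{p}_\Delta/\fr{n}_\Delta$ identifies with $\{(A,D)\in \fr{gl}(\Delta)\oplus \fr{gl}(V/\Delta):\mathrm{tr}(A)+\mathrm{tr}(D)=0\}$, which is reductive (its derived subalgebra $\fr{sl}(\Delta)\oplus \fr{sl}(V/\Delta)$ is semisimple and has at most a one-dimensional complement).

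Claim (3) then follows formally from (6): $\fr{n}_\Delta$ is a nilpotent ideal of $\fr{p}_\Delta$ and so is contained in the nilradical, while the reductive quotient has no nonzero nilpotent ideals, forcing equality. For (2), I would invoke the standard flag description of parabolic subalgebras of $\fr{sl}(V)$: a parabolic strictly containing $\fr{p}_\Delta$ would have to stabilize a proper refinement of the flag $0\subset \Delta\subset V$, of which none exists.

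The only item requiring any genuine calculation is (5). Since the Killing form on $\fr{sl}(V)$ is a nonzero scalar multiple of $(X,Y)\mapsto \mathrm{tr}(XY)$, expanding the trace blockwise for arbitrary $X$ and for $Y\in \fr{p}_\Delta$ with block entries $(A,B,0,D)$ yields $\mathrm{tr}(XY)=\mathrm{tr}(X_{11}A)+\mathrm{tr}(X_{21}B)+\mathrm{tr}(X_{22}D)$. Varying $B$ freely forces $X_{21}=0$; varying $A$ and $D$ subject to the single constraint $\mathrm{tr}(A)+\mathrm{tr}(D)=0$ then forces $X_{11}$ and $X_{22}$ to be equal scalar matrices, and the condition $X\in \fr{sl}(V)$ forces that common scalar to vanish. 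Hence $(\fr{p}_\Delta)^\perp\subseteq \fr{n}_\Delta$, and the reverse inclusion is immediate from the block form. I expect no real obstacle anywhere: every step is a direct verification once the block decomposition is in place, and the only non-mechanical ingredient is the appeal in (2) to the flag characterization of parabolics, which is a standard structural fact in type A.
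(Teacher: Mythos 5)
The paper itself offers no proof of this lemma---it is quoted as classical with a pointer to [Kob, Lemma 7.3.1]---so your block-matrix verification is a reasonable way to supply one, and items (1), (4), (5) and the ``ideal'' half of (6) go through exactly as you set them up. The main thing you must fix is your opening claim that ``reading off the definitions'' gives $X\in\fr{n}_\Delta$ iff $X_{11}=X_{21}=X_{22}=0$. The definition in the paper, $f(V)\subset\Delta$, only forces $X_{21}=X_{22}=0$ (and then $\mathrm{tr}(X_{11})=0$); the vanishing of $X_{11}$ is automatic only when $\dim\Delta=1$. For $2\le\dim\Delta\le\dim V-1$ the set $\{f\in\fr{sl}(V):f(V)\subset\Delta\}$ contains a copy of $\fr{sl}(\Delta)$ and is neither abelian nor the nilradical, so items (3), (4), (5) are false for the literal definition; the lemma holds only with $\fr{n}_\Delta=\mathrm{Hom}(V/\Delta,\Delta)=\{f: f(V)\subset\Delta\subset\ker f\}$, which is what your blocks actually describe and what the paper's own matrix pictures use. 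You should state explicitly that you are working with this corrected description rather than pretending it follows from the displayed definition.

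Two further steps, as written, would fail. In (3), a reductive Lie algebra \emph{does} have nonzero nilpotent ideals: its centre is an abelian, hence nilpotent, ideal, so your one-line deduction does not close. You need the extra observation that a nilpotent ideal $I\subset\fr{p}_\Delta$ with nonzero image in the centre of the Levi would contain an element acting on $\fr{n}_\Delta$ by a nonzero scalar, whence every term of the lower central series of $I$ contains $\fr{n}_\Delta$ and $I$ is not nilpotent; this pins the nilradical down to $\fr{n}_\Delta$. In (2), a parabolic strictly containing $\fr{p}_\Delta$ stabilizes a proper \emph{coarsening}, not refinement, of the flag $0\subset\Delta\subset V$ (proper refinements certainly exist; proper coarsenings, other than the trivial flag, do not), so the sentence needs to be reversed. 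Finally, note that what you prove in (6)---and what is true---is that $\fr{p}_\Delta/\fr{n}_\Delta$ is reductive with a one-dimensional centre; the lemma's literal assertion of semisimplicity is itself inaccurate, and your more careful phrasing is the correct one.
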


\begin{lemma}
The following statements are equivalent:
\begin{enumerate}
	\item $\fr{p}_\Delta$ is a parabolic subalgebra of maximal dimension.
	\item $\dim \fr{n}_\Delta = \dim V -1$.
	\item $\dim \Delta=1$ or $\mathrm{codim}\: \Delta=1$.
\end{enumerate}
\end{lemma}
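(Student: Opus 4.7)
The plan is to reduce all three equivalences to a single arithmetic count. Set $m = \dim V$ and $k = \dim \Delta$, so that $1 \leq k \leq m-1$ by the proper-nontrivial hypothesis. I pick an ordered basis of $V$ whose first $k$ vectors span $\Delta$; then every $f \in \fr{gl}(V)$ is represented as a $2 \times 2$ block matrix with block sizes $k$ and $m-k$, which I label $A, B, C, D$ (top-left, top-right, bottom-left, bottom-right).

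In these coordinates the condition $f(\Delta) \subset \Delta$ forces the lower-left block $C$ to vanish, and the additional traceless condition removes exactly one more dimension, giving
\[ \dim \fr{p}_\Delta = m^2 - k(m-k) - 1. \]
By the previous lemma $\fr{n}_\Delta$ is the nilradical of $\fr{p}_\Delta$, which in the block form consists of matrices with only the upper-right block $B$ nonzero (so the trace condition is automatic), giving
\[ \dim \fr{n}_\Delta = k(m-k). \]

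Both (1) and (2) therefore reduce to the same arithmetic question: for integer $k \in \{1, \ldots, m-1\}$, when is $k(m-k)$ equal to $m-1$, equivalently when is it minimal on this range? The factorization
\[ k(m-k) - (m-1) = -(k-1)\bigl(k - (m-1)\bigr) \]
shows this quantity vanishes precisely at $k = 1$ and $k = m-1$ and is strictly positive in between. Hence $\dim \fr{p}_\Delta$ is maximized and $\dim \fr{n}_\Delta = m - 1$ exactly when $k \in \{1, m-1\}$, establishing $(1) \Leftrightarrow (3)$ and $(2) \Leftrightarrow (3)$.

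The whole argument is a routine block-matrix dimension count followed by a one-line factorization, so I do not expect any serious obstacle; the only input beyond elementary linear algebra is the identification of $\fr{n}_\Delta$ with the strictly off-diagonal block, which is supplied by the preceding lemma.
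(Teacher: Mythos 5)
Your proof is correct, and there is nothing in the paper to compare it against: the paper states this lemma without proof, treating it (like the preceding lemma, for which it cites Kobayashi) as a classical fact. Your block-matrix dimension count is the standard argument, and the factorization $k(m-k)-(m-1)=(k-1)(m-1-k)\geq 0$ cleanly handles all three equivalences at once. Two small points are worth making explicit. First, your count $\dim\fr{n}_\Delta=k(m-k)$ uses the identification of $\fr{n}_\Delta$ with the strictly off-diagonal block $\{f: f(V)\subset\Delta,\ f(\Delta)=0\}$; the paper's displayed definition $\fr{n}_\Delta=\{f\in\fr{sl}(V): f(V)\subset\Delta\}$ literally yields dimension $k\dim V-1$, which disagrees for $\mathrm{codim}\,\Delta=1$ and would make statement (2) false there. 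Reading $\fr{n}_\Delta$ as the nilradical (as the preceding lemma and the paper's matrix pictures for $\fr{n}_{\Delta^\perp}$ both demand) is the right call, but the computation that the nilradical of the block-triangular parabolic is exactly the off-diagonal block is a step you are supplying, not one the preceding lemma hands you. Second, for the equivalence with (1) you compare $\dim\fr{p}_\Delta$ only across the family $\fr{p}_{\Delta'}$; to conclude maximality among all parabolic subalgebras you should invoke the fact that every proper parabolic is contained in some $\fr{p}_{\Delta'}$ (the stabilizer of any subspace in its flag), so the supremum of dimensions is attained within this family. Both are one-line additions; the argument is otherwise complete.
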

For any subspace $\Delta \subset V$, denote by $\fr{C}_{\Delta}$ the full subcategory $\fr{sl}(V)$-Mod consisting of modules which are free of rank $1$ when restricted to $U(\fr{n}_{\Delta})$. 

When $\Delta \subset V$ is a one-dimensional subspace, we may fix a basis $v_1 \ldots v_{n}$ of $V$ such that $v_{n}\in \Delta$. We write $\Delta^{\perp}$ for the subspace spanned by $v_1,\ldots, v_{n-1}$. This choice of basis lets us identify $\fr{sl}(V) = \fr{sl}_{n}$ which gives
\[\fr{p}_{\Delta} = 
\begin{bmatrix}
* &    *       & \cdots & * & 0 \\
* &   *       & \cdots & * & 0 \\
\vdots &   \vdots  & \ddots & \vdots & \vdots \\
* &   *       & \cdots & * & 0 \\
* &   *       & \cdots & * & * 
\end{bmatrix}
\qquad 
\fr{p}_{\Delta^\perp} = 
\begin{bmatrix}
* &   *       & \cdots & * & * \\
* &   *       & \cdots & * & * \\
\vdots &   \vdots  & \ddots & \vdots & \vdots \\
* &   *       & \cdots & * & * \\
0 &   0       & \cdots & 0 & * 
\end{bmatrix}
\]

\[\fr{n}_{\Delta} = 
\begin{bmatrix}
0 &   0       & \cdots & 0 & 0 \\
0 &   0       & \cdots & 0 & 0 \\
\vdots &   \vdots  & \ddots & \vdots & \vdots \\
0 &   0       & \cdots & 0 & 0 \\
* &   *       & \cdots & * & 0 
\end{bmatrix}
\qquad 
\fr{n}_{\Delta^\perp} = 
\begin{bmatrix}
0 &   0       & \cdots & 0 & * \\
0 &   0       & \cdots & 0 & * \\
\vdots &   \vdots  & \ddots & \vdots & \vdots \\
0 &   0       & \cdots & 0 & * \\
0 &   0       & \cdots & 0 & 0 
\end{bmatrix}.
\]

\begin{lemma}
There are equivalences of categories $\fr{C}_{\Delta} \simeq \fr{C}_{\Delta'}$ for any pairs of subspaces $\Delta$ and $\Delta'$ of either dimension or codimension $1$.
\end{lemma}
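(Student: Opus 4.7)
The plan is to realize each equivalence as a twist along a suitable automorphism of $\fr{sl}(V)$. Recall that any automorphism $\Phi$ of $\fr{sl}(V)$ induces a self-equivalence of $\fr{sl}(V)\text{-Mod}$ by pullback: $M\mapsto M^{\Phi}$ where the underlying vector space and linear endomorphisms are unchanged but $x\cdot_{\Phi} m := \Phi(x)\cdot m$. Since $U(\Phi(\fr{a}))\subset U(\fr{sl}(V))$ is isomorphic as an algebra to $U(\fr{a})$ under $\Phi$, and since this pullback commutes with restriction up to the corresponding relabeling of the subalgebra, $M$ is free of rank $1$ over $U(\fr{a})$ if and only if $M^{\Phi}$ is free of rank $1$ over $U(\Phi^{-1}(\fr{a}))$. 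Thus it suffices to produce an automorphism of $\fr{sl}(V)$ carrying $\fr{n}_{\Delta}$ to $\fr{n}_{\Delta'}$ in each of the relevant cases.

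First, for any $\varphi\in GL(V)$ the inner automorphism $\Phi_\varphi(X):=\varphi X\varphi^{-1}$ of $\fr{sl}(V)$ satisfies $\Phi_\varphi(\fr{n}_{\Delta})=\fr{n}_{\varphi(\Delta)}$, because $X(V)\subset \Delta$ if and only if $\varphi X \varphi^{-1}(V)\subset \varphi(\Delta)$. Since $GL(V)$ acts transitively on the sets of $1$-dimensional and of codimension-$1$ subspaces of $V$ separately, this handles the cases where $\Delta$ and $\Delta'$ have the same (co)dimension.

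It remains to bridge a $1$-dimensional subspace with a codimension-$1$ subspace. Fix a basis of $V$, identify $\fr{sl}(V)=\fr{sl}_n$, and consider the Chevalley involution $\tau(X):=-X^{t}$, which is a Lie-algebra automorphism of $\fr{sl}_n$. Reading off the matrix pictures given just above the statement, $\tau$ sends $\fr{n}_{\Delta}$ (the bottom row, with $\Delta=\K v_n$) to $\fr{n}_{\Delta^\perp}$ (the rightmost column, with $\Delta^\perp=\mathrm{span}(v_1,\dots,v_{n-1})$). Combining $\tau$ with the inner automorphisms from the previous step produces an automorphism interchanging $\fr{n}_{\Delta}$ with $\fr{n}_{\Delta'}$ for arbitrary $\Delta$ of dimension $1$ and $\Delta'$ of codimension $1$, and pullback along it gives the desired equivalence $\fr{C}_{\Delta}\simeq \fr{C}_{\Delta'}$.

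The argument is essentially formal; the only point that requires care is checking that pullback by an automorphism genuinely preserves the property of being free of rank one over the (suitably transported) subalgebra, which is immediate since the pullback is an equivalence $U(\fr{sl}(V))\text{-Mod}\to U(\fr{sl}(V))\text{-Mod}$ that restricts to an isomorphism $U(\fr{n}_{\Delta})\to U(\fr{n}_{\Phi^{-1}(\Delta)})$ on the relevant subalgebras.
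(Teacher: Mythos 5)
Your argument is correct and is essentially the paper's own proof: the paper likewise handles the equal-(co)dimension case by conjugation by an invertible $S$ carrying $\Delta$ to $\Delta'$, bridges dimension $1$ and codimension $1$ via the outer automorphism $x\mapsto -x^{T}$, and composes the two. The only difference is that you spell out the (routine) verification that twisting preserves freeness over the transported subalgebra, which the paper leaves implicit.
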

\begin{proof}
First assume that $\dim \Delta=\dim \Delta'$. Then we may pick an invertible $S$ that bijectively maps $\Delta$ to $\Delta'$. But then the  automorphism $\varphi:x\mapsto SxS^{-1}$ maps $\fr{n}_{\Delta}$ to $\fr{n}_{\Delta'}$. And therefore, if $M$ is a module free over $\fr{n}_{\Delta'}$, then the twisted module ${}^{\varphi}M$ (in which the action is $x\bullet m:=\varphi(x)\cdot m$) will be free over $\fr{n}_{\Delta}$. 
Similarly, we note that if $\dim \Delta=1$ the category $\fr{C}_{\Delta}$ is equivalent to $\fr{C}_{\Delta^{\perp}}$ by twisting by the outer isomorphism $x\mapsto-x^{T}$ (minus transpose). 
Finally, by combining the above statements we see that if $\dim \Delta = 1$ and $\mathrm{codim}\:\Delta'=1$ we have
 $\fr{C}_{\Delta} \simeq \fr{C}_{\Delta^{\perp}} \simeq \fr{C}_{\Delta'}$.
\end{proof}

Thus we shall restrict our focus to modules which are free of rank $1$ over the universal enveloping algebra of the fixed subalgebra $\fr{n}_{\Delta^\perp}$ of $\fr{sl}_n$ as described above. All other $\fr{sl}(V)$-modules free over the nilradical of a maximal-dimensional parabolic can be obtained from these by twisting by automorphisms.

\section{$\fr{sl}_2$-modules}
\label{sec:sl2}
We treat the case $\dim V=2$ separately, because we obtain more extensive results and nicer formulas in this setting.
We fix the standard basis $\{x,y,h\}$ for $\fr{sl}_2(\bb{K})$. These elements satisfy $[h,x]=2x,  [h,y]=-2y,\; [x,y]=h$.

When $\dim V=2$ the only parabolic subalgebras of $\fr{sl}(V)$ are Borel-subalgebras $\fr{p}=\fr{h}\oplus \fr{n}$, where the corresponding Cartan and nilradical subalgebras both are $1$-dimensional. 
By picking a basis $(v_1,v_2)$ for $V$ where $v_1\in \fr{h}$ and $v_2 \in \fr{n}$, we obtain an identification $\fr{sl}(V)=\fr{sl}_2$, $\fr{p}=\mathrm{span}(h,x)$, and $\fr{n}=\mathrm{span}(x)$. Since $U(\fr{n})=\K[x]$, our classification problem reduces to describing all possible $\fr{sl}_2$-module structures on $\K[x]$ in which we have $x\cdot f(x)=xf(x)$.

We start by defining some such modules.
\begin{prop}
\label{sl2def}
Fix a polynomial $p(x)\in \K[x]$ and define a second polynomial \[q(x):=-\frac{1}{2x}\int_{0}^{x}\big( p(t)p'(t)+tp''(t)\big) dt.\]

Then the following $\fr{sl}_2$-action defines an $\fr{sl}_2$-module structure on $\K[x]$: 
\begin{align*}
x\cdot f(x)&=xf(x),\\
h\cdot f(x)&=p(x)f(x)+2xf'(x),\\
y\cdot f(x)&=q(x)f(x)-p(x)f'(x)-xf''(x).\\
\end{align*}
We denote this module by $V(p)$.
\end{prop}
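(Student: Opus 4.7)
The plan is to directly verify the three defining commutation relations $[h,x]=2x$, $[x,y]=h$, and $[h,y]=-2y$ by applying both sides to an arbitrary polynomial $f\in\K[x]$. Since these relations present $\fr{sl}_2$ on the generators $\{x,y,h\}$, this verification is both necessary and sufficient for the given formulas to define a module structure on $\K[x]$.

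Two of the three relations will be formal. For $[h,x]$, the $x$-action is multiplication by $x$, so in $h(xf)-x(hf)$ the $p(x)$-contributions cancel and the derivative part $2xf'$ of $h\cdot f$ contributes $2xf$ after commuting past multiplication by $x$, matching $2(x\cdot f)$. For $[x,y]$, when computing $x(y\cdot f)-y(x\cdot f)$ the $qf$-terms cancel and the second-order pieces $x^2f''$ also cancel between the two sides, collapsing the remainder to $pf+2xf'=h\cdot f$. Neither of these two calculations uses any specific property of $q$.

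The substantive check is $[h,y]\cdot f=-2\,y\cdot f$. Here I would expand $h(y\cdot f)$ and $y(h\cdot f)$ as linear combinations of $f,f',f'',f'''$ with coefficients depending on $p,p',p''$ and $q,q'$, and subtract. The coefficient of $f'''$ cancels automatically; the coefficient of $f'$ collapses to $2p$; and the coefficient of $f''$ collapses to $2x$, matching the corresponding coefficients in $-2(y\cdot f)=-2qf+2pf'+2xf''$. All the content of the relation therefore sits in the coefficient of $f$, which simplifies to $2xq'+pp'+xp''$. Matching this against the coefficient $-2q$ yields
\[ 2xq'+2q \;=\; -(pp'+xp''), \]
equivalently $(2xq)'=-(pp'+xp'')$. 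Integrating from $0$ to $x$ and dividing by $2x$ recovers exactly the formula defining $q$ in the statement, so the relation holds.

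The main obstacle is purely bookkeeping in the third relation: one has to track four derivative-orders of $f$ across two Leibniz expansions without sign errors. A brief afterword should note that the expression defining $q$ really does lie in $\K[x]$: the integrand $p(t)p'(t)+tp''(t)$ is a polynomial, so $\int_0^x(p(t)p'(t)+tp''(t))\,dt$ vanishes at $x=0$ with a factor of $x$, and dividing by $2x$ leaves a polynomial.
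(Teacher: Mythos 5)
Your proposal is correct and follows essentially the same route as the paper: direct verification of the three bracket relations on an arbitrary $f$, with the first two being formal and the relation $[h,y]=-2y$ reducing to the identity $(2xq)'=-(pp'+xp'')$, which is exactly how $q$ is defined. The closing remark that $q$ is genuinely a polynomial is a small but worthwhile addition that the paper only addresses later, in the proof of Proposition~\ref{exhaustprop}.
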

\begin{proof}
We verify that the above action respects the $\fr{sl}_2$ structure. We have
\begin{align*}
x \cdot y \cdot f -& y \cdot x \cdot f = x\cdot(qf-pf'-xf'')-y\cdot(xf)\\
=&x(qf-pf'-xf'')-\big(q(xf)-p(f+xf')-x(2f'+xf'')\big)\\
=&pf+2xf'=h\cdot f = [x,y] \cdot f,
\end{align*}
and
\begin{align*}
h \cdot x \cdot f -& x \cdot h \cdot f = h\cdot(xf)-x\cdot(pf+2xf')\\
=&p(xf)+2x(f+xf')-x(pf+2xf')=2xf=[h,x]\cdot f,
\end{align*}
and
\begin{align*}
h \cdot y \cdot f -& y \cdot h \cdot f= h\cdot(qf-pf'-xf'')-y\cdot(pf+2xf')\\
=&p(qf-pf'-xf'')+2x(q'f+qf'-p'f'-pf''-f''-xf''')\\
  &-q(pf+2xf')+p(p'f+pf'+2f'+2xf'')\\
  &+x(p''f+p'f'+p'f'+pf''+4f''+2xf''')\\
=&\big(2xq'+2q+pp'+xp''\big)f-2(qf-pf'-xf'')\\
=&\big(2\dd{x}(xq)+pp'+xp'')f-2(qf-pf'-xf'')\\
=&\bigg(pp'+xp''-\dd{x}\int_{0}^{x} p(t)p'(t)+tp''(t) dt \bigg)f-2(qf-pf'-xf'')\\
=&\big(pp'+xp''-pp'-xp''\big)f-2(qf-pf'-xf'')=-2y\cdot f=[h,y]\cdot f.
\end{align*}
\end{proof}

It turns out that the modules $V(p)$ defined above are pairwise non-isomorphic and exhaust all modules whose restriction to $\K[x]$ is free of rank $1$.
\begin{prop}
\label{uniprop}
$V(p)\simeq V(\ol{p})$ if and only if $p=\ol{p}$.
\end{prop}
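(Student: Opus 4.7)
The plan is to exploit the very rigid structure that follows from $V(p)$ being free of rank $1$ over $U(\fr{n})=\K[x]$. Any $\fr{sl}_2$-module isomorphism $\varphi: V(p)\to V(\ol{p})$ must in particular be a $U(\fr{n})$-module isomorphism, so I will first pin down all such $U(\fr{n})$-linear maps and then impose compatibility with the $h$-action.

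First I would observe that, as $\K[x]$-modules under the $x$-action, both $V(p)$ and $V(\ol{p})$ are just $\K[x]$ with multiplication. Thus $\varphi$ is determined by the single element $g(x):=\varphi(1)\in \K[x]$ via $\varphi(f)=fg$. For $\varphi$ to be a bijection of $\K[x]$ to itself as a $\K$-vector space, multiplication by $g$ must be surjective; since $\K[x]$ is an integral domain this forces $g$ to be a nonzero constant. Hence every possible isomorphism has the form $\varphi(f)=cf$ for some $c\in \K^{\times}$.

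Next I would use the compatibility with the $h$-action. The relation $\varphi(h\cdot f)=h\cdot\varphi(f)$ for $f=1$ gives
\[
c\,p(x) = \varphi(p) = \varphi(h\cdot 1) = h\cdot c = c\,\ol{p}(x),
\]
so $p=\ol{p}$ immediately. The converse is trivial. The $y$-action need not be checked because the $\fr{sl}_2$ relations already force the formula for $q$ from $p$ (as used in Proposition~\ref{sl2def}), so the $y$-compatibility is automatic once $h$ and $x$ agree.

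There is no real obstacle here; the only point to be careful about is that $\varphi$ is an isomorphism of $\K$-vector spaces and not just a $\K[x]$-linear map, which is what rules out nonconstant $g$. This argument will in fact generalize verbatim to the $\fr{sl}_{n+1}$ setting in Section~\ref{sec:class}, where freeness of rank $1$ over $U(\fr{n})=\pol$ again forces $\varphi$ to be a scalar and the Cartan action then recovers the polynomial $p$.
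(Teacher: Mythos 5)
Your proof is correct and follows essentially the same argument as the paper: $\varphi$ is determined by $\varphi(1)$, which must be a nonzero constant, and then compatibility with the $h$-action forces $p=\ol{p}$. The extra remarks (why nonconstant $g$ is excluded, why the $y$-action is automatic) are sound elaborations of the paper's terser version.
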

\begin{proof}
Let $\varphi: V(p) \ra  V(\ol{p})$ be an isomorphism. Then $\varphi(f)=f\varphi(1)$ so $\varphi(1)$ is a nonzero constant.
The relation $\varphi(h\cdot f)=h\cdot \varphi(f)$ is equivalent to the condition $\varphi(1)(p-\ol{p})=0$ so $p=\ol{p}$.
\end{proof}

\begin{prop}
\label{exhaustprop}
Any $M\in \fr{sl}_2$-Mod such that $\Res_{\K[x]}^{U(\fr{sl}_2)}M$ is free of rank $1$ is isomorphic to $V(p)$ for some polynomial $p$.
\end{prop}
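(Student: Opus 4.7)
The plan is to use the free $\K[x]$-module structure on $M$ to identify it with $\K[x]$ (with $x$ acting by multiplication), and then to let the three Lie bracket relations of $\fr{sl}_2$ successively force the actions of $h$ and $y$ into the form prescribed by Proposition~\ref{sl2def}. The only free data are the two polynomials $p(x) := h\cdot 1$ and $q(x) := y\cdot 1$, which are elements of $M = \K[x]$.

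First, the relation $[h,x]=2x$, rewritten as $h\cdot(xf) = x(h\cdot f) + 2xf$, gives an induction on $n$ that determines $h\cdot x^n$ from $h\cdot 1 = p(x)$, yielding $h\cdot f = p(x)f + 2xf'(x)$ on all of $\K[x]$. Next, the relation $[x,y]=h$, rewritten as $y\cdot(xf) = x(y\cdot f) - h\cdot f$, combined with the now-known action of $h$, gives a second induction that forces $y\cdot f = q(x)f - p(x)f'(x) - xf''(x)$ from $y\cdot 1 = q(x)$. At this point the full module structure is pinned down by the pair $(p,q)$, and it remains to show that $q$ is determined by $p$ via the integral formula of Proposition~\ref{sl2def}. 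For this I would use the remaining relation $[h,y]=-2y$ evaluated at $1\in M$: substituting the explicit formulas for $h$ and $y$ into $h\cdot q - y\cdot p = -2q$, this simplifies to the first-order ODE $2(xq)' = -(pp' + xp'')$, whose unique polynomial solution (obtained by integrating from $0$ and dividing by $x$) is exactly the $q$ appearing in $V(p)$.

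The only subtle point is checking that $q$ so defined is genuinely a polynomial, which is immediate because the antiderivative from $0$ of a polynomial vanishes at $x=0$ and is therefore divisible by $x$. It is also worth noting that one need not re-check $[h,y]=-2y$ on higher powers of $x$: by hypothesis $M$ is a full $\fr{sl}_2$-module, so the relation holds automatically everywhere, and evaluating at $1$ already captures the single remaining constraint on $q$. With that, $M \simeq V(p)$ via the identity map $\K[x]\to\K[x]$.
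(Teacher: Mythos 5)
Your proposal is correct and follows essentially the same route as the paper: identify $M$ with $\K[x]$, induct on $[h,x]=2x$ and $[x,y]=h$ to pin down the $h$- and $y$-actions in terms of $p=h\cdot 1$ and $q=y\cdot 1$, then use $[h,y]=-2y$ to derive $2(xq)'=-(pp'+xp'')$ and conclude $q$ is the stated integral. The only (valid) streamlining is that you evaluate the last relation at $f=1$ rather than expanding it on a general $f$ as the paper does; since the expansion reduces to $-(2q+2xq'+pp'+xp'')f$, both yield the same constraint.
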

\begin{proof}
Let $M=\K[x]$ with a given $\fr{sl}_2$-module structure such that $x\cdot f(x)= xf(x)$. Define $p(x):=h\cdot 1$.
We claim that this implies that $h\cdot x^k= (p(x)+2k)x^{k}$. Indeed, it holds for $k=0$, and by induction we have
\[h\cdot x^{k+1}=h \cdot x \cdot x^k = x \cdot h \cdot x^k + [h,x]\cdot x^k= x(p(x)+2k)x^{k}+2x\cdot x^k\]
\[=(p(x)+2k)x^{k+1}+2x^{k+1}=(p(x)+2(k+1))x^{k+1}.\]
Note that $h\cdot x^k= (p(x)+2k)x^{k}$ for all $k$ can be written more compactly as $h\cdot f=pf+2xf'$ as in the definition of $V(p)$ above.

Next, we define $q(x):=y\cdot 1$ and claim that this implies that $y\cdot f=qf-pf'-xf''$ as in the above definition. This equality is equivalent to $y\cdot x^k=qx^k-kpx^{k-1}-k(k-1)x^{k-1}=(qx-kp-k(k-1))x^{k-1}$ for all $k$. The latter statement can again be proved by induction: it holds trivially for $k=0$ and we have
\[y\cdot x^{k+1} = y\cdot x  \cdot x^k= x\cdot y \cdot x^k + [y,x]\cdot x^k=x(y \cdot x^k) -h\cdot x^k\]
\[=x((qx-kp-k(k-1))x^{k-1})-(p(x)+2k)x^{k}=x((qx-kp-k(k-1))x^{k-1})-(p+2k)x^{k}\]
\[=(qx-kp-k(k-1)-p-2k)x^{k}=(qx-(k+1)p-(k+1)((k+1)-1))x^{(k+1)-1}.\]
This proves that the $h$- and $y$-action are completely determined by $p$ and $q$. It remains only to verify that $q$ is uniquely determined by $p$ as in the above definition. For this we expand the equality $[h,y] \cdot f-(h\cdot y\cdot f - y\cdot h \cdot f)=0$. Our previous considerations show that the left side expands as follows.
\begin{align*}
0&=[h,y] \cdot f-(h\cdot y\cdot f - y\cdot h \cdot f)=-2y\cdot f - h\cdot (qf-pf'-xf'') + y\cdot (pf+2xf')\\
&=-2(qf-pf'-xf'')-p(qf-pf'-xf'')-2x(q'f+qf'-p'f'-pf''-f''-xf^{(3)})\\
&\quad +q(pf+2xf')-p(p'f+pf'+2f'+2xf'')-x(p''f+2f'p'+pf''+4f''+2xf^{(3)})\\
&=-(2q+2xq'+pp'+xp'')f.
\end{align*}
This should hold for all $f$, which implies that $2q+2xq'+pp'+xp''=0$. This can be rewritten $\ddx(xq)=-\frac{1}{2}(pp'+xp'')$, which has the unique polynomial solution $q=-\frac{1}{2x} \int (pp'+xp'') dx$ (where the integration constant is forced to be zero). Thus $q$ is determined by $p$ and the module structure is just as in the above definition.
\end{proof}

Next we investigate the simplicity of the modules $V(p)$.
\begin{prop}
The module $V(p)$ is simple if and only if $p(0) \not\in -\bb{N}$. Otherwise $V(p(x))$ has length $2$ and we have a short exact sequence
\[0 \ra V\big(p(x)-2p(0)+2\big) \ra V(p(x)) \ra L(-p(0)) \ra 0,\] where $L(-p(0))$ is the simple highest weight module of highest weight $-p(0)\in \bb{N}$. 
\end{prop}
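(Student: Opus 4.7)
The plan is to determine all $\fr{sl}_2$-submodules of $V(p)$ by analyzing the constraints imposed by each generator. First, any submodule $N\subseteq V(p)$ must be closed under the $x$-action, which is simply multiplication by $x$; hence $N$ is an ideal of $\K[x]$, so $N=(g)\K[x]$ for some polynomial $g$. Next, closure under the $h$-action requires $pg+2xg'\in(g)$, i.e.\ $g\mid 2xg'$. Comparing degrees forces $2xg'=2(\deg g)\,g$, and comparing coefficients then forces $g$ to be a monomial. Hence every nonzero submodule of $V(p)$ is of the form $x^n\K[x]$ for some $n\geq 0$.

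Next I would determine which $x^n\K[x]$ are actually preserved by $y$. The formula $y\cdot x^n=qx^n-np\,x^{n-1}-n(n-1)x^{n-1}$ shows that $y\cdot x^n\in x^n\K[x]$ if and only if $x\mid n(p+n-1)$, i.e.\ either $n=0$ or $p(0)+n-1=0$. Thus nontrivial proper submodules exist precisely when $p(0)\in -\bb{N}$, in which case the unique one is $x^n\K[x]$ with $n=1-p(0)$. This immediately yields simplicity when $p(0)\notin -\bb{N}$ and shows that the composition length is $2$ otherwise.

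For the remaining case $p(0)\in -\bb{N}$, I would identify the two factors. For the quotient $V(p)/x^n\K[x]$: it has dimension $n=1-p(0)$ with basis $1,x,\ldots,x^{n-1}$; the class of $x^{n-1}$ satisfies $x\cdot x^{n-1}=0$ and $h\cdot x^{n-1}=(p(0)+2(n-1))x^{n-1}=-p(0)\,x^{n-1}$, so it generates a highest weight submodule of weight $-p(0)\in\bb{N}$. Since the quotient has dimension exactly $\dim L(-p(0))=-p(0)+1$, it must equal $L(-p(0))$. For the submodule, I would check that the $\K[x]$-linear map $\varphi:V(\tilde p)\ra x^n\K[x]$, $f\mapsto x^nf$, where $\tilde p=p-2p(0)+2$, is an $\fr{sl}_2$-isomorphism. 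The $x$-equivariance is immediate; $h$-equivariance reduces to the identity $p+2n=\tilde p$, which is precisely the definition of $\tilde p$.

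The main obstacle is verifying $y$-equivariance of $\varphi$. Direct expansion of $y\cdot(x^nf)$ in $V(p)$ gives a term $n(p+n-1)x^{n-1}f$, which is divisible by $x^n$ thanks exactly to the condition $p(0)+n-1=0$; writing $p-p(0)=xr(x)$, one obtains $y\cdot(x^nf)=x^n\big((q-nr)f-\tilde p f'-xf''\big)$. What remains is to identify $q-nr$ with the polynomial $\tilde q$ attached to $\tilde p$ by Proposition~\ref{sl2def}. Using $\tilde p'=p'$ and $\tilde p''=p''$, the defining ODEs $\ddx(xq)=-\tfrac12(pp'+xp'')$ and $\ddx(x\tilde q)=-\tfrac12(\tilde p\tilde p'+x\tilde p'')$ combine to $\ddx\bigl(x(\tilde q-q)\bigr)=-np'$; integrating and matching the constant via $x=0$ gives $x(\tilde q-q)=-n(p-p(0))=-nxr$, hence $\tilde q=q-nr$, completing the isomorphism and the proof.
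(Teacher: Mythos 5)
Your proof is correct, and its overall skeleton matches the paper's: show every submodule is $x^n\K[x]$, use the $y$-action on $x^n$ to pin down $n=1-p(0)$, identify the quotient as $L(-p(0))$, and identify the submodule as $V(p-2p(0)+2)$. Two steps are executed differently, though. First, to show submodules are monomial ideals, the paper acts by products of $(k-d)$ with $d=\tfrac{1}{2}(h-p(x))$ the degree operator to project onto homogeneous components, whereas you observe that a $\K[x]$-submodule is a principal ideal $(g)$ and that $h$-invariance forces $g\mid 2xg'$, hence $g$ is a monomial; your argument is more elementary and avoids the interpolation step. Second, to identify the submodule, the paper invokes Proposition~\ref{exhaustprop}: the submodule is $\K[x]$-free of rank $1$, hence isomorphic to some $V(\ol{p})$, and $\ol{p}$ is read off from the $h$-action on the generator alone, which spares it any $y$-computation. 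You instead verify $y$-equivariance of $f\mapsto x^nf$ directly, which requires your ODE argument showing $\tilde q=q-nr$ where $p-p(0)=xr$; that computation is correct and makes the step self-contained, at the cost of more calculation, while the paper's appeal to the classification is shorter but logically heavier. Both routes are sound.
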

\begin{proof}
Let $S\subset V(p(x))$ be a proper nonzero submodule. We first claim that $S$ is a homogeneous ideal of $\K[x]$. This follows because $d:=\frac{1}{2}(h-p(x)) \in U(\fr{sl}_2)$ is the degree operator, which acts by $f\mapsto xf'(x)$, so by repeatedly acting by $(k-d)$ for different $k\in \bb{N}$ we can reduce any element $f$ to its lowest degree homogeneous component. Thus $S=x^k\K[x]$ for some $k>0$. Then $S\ni (q(x)-y)\cdot x^k=k(p(x)+(k-1))x^{k-1}$, so $x^k | k(p(x)+(k-1))x^{k-1}$ and $x|(p(x)+(k-1))$, which in turn means that $p(0)=1-k$. Thus if $p(0)\not\in -\bb{N}$, this is a contradiction so $V(p(x))$ is simple. On the other hand, if $p(0)=1-k\in -{\bb{N}}$ then $V(p(x))$ has a unique proper nontrivial submodule $x^k\K[x]$, so $V(p(x))$ has length $2$. Finally we analyze the quotient $V(p(x))/\langle x^k \rangle$ for $p(0)=1-k\in -\bb{N}$. This quotient is finite-dimensional and we have $x\cdot x^{k-1}=0$ and 
\[h\cdot x^{k-1}=(p(0)+2(k-1))x^{k-1}=(1-k+2(k-1))x^{k-1}=(k-1)x^{k-1},\] so $x^{k-1}$ is a highest weight vector, and the quotient is the simple highest weight module $L(k-1)$, and we recall that we had $k-1=-p(0)$. 

It remains only to verify that the submodule $x^k\K[x]$ is isomorphic to $V(p(x)-2p(0)+2)$. The submodule $x^k\K[x]$ is free of rank $1$ over $\K[x]$ so by Proposition~\ref{exhaustprop} we have $x^k\K[x]\simeq V(\ol{p}(x))$ for some polynomial $\ol{p}$. Any isomorphism $\varphi:x^k\K[x]\rightarrow V(\ol{p}(x))$ must be a multiple of $\varphi:V(\ol{p}(x)) \ra V(p(x))$ defined by $\varphi(f)=x^kf$ since both modules are free over $\K[x]$ and $\varphi$ needs to be bijective map between $V(\ol{p}(x))$ and the submodule $x^k\K[x] \subset V(p(x))$. Since $\varphi$ is an isomorphism we have \[x^kpf+2xkx^{k-1}+2x^{k+1}f'=h\cdot \varphi(f)= \varphi(h\cdot f)=x^k\ol{p}f+2x^{k+1},\]
from which it follows that $\ol{p}(x)=p(x)+2k=p(x)+2(1-p(0))$.
\end{proof}

Actually, the family of modules $V(p(x))$ includes the lowest weight Verma-modules as seen below. 
Take $p=\lambda \in \K$. Then $q=0$ and the action on the basis $\{x^k\}$ of the module $V(\lambda)$ is given by
\begin{align*}
x\cdot x^k&=x^{k+1},\\
h\cdot x^k&=(\lambda+2k)x^k,\\
y\cdot x^k&=-k(\lambda +(k-1))x^{k-1}.
\end{align*}
Thus each $x^k$ is a weight vector of weight $\lambda+2k$, and $V(\lambda)$ is a weight module of lowest weight $\lambda$. As in the proposition, $V(\lambda)$ is reducible precisely when $p(0)=\lambda \in -\bb{N}$. The quotient is the unique simple highest weight module of highest weight $-\lambda$.

\subsection{Tensor product decomposition}
\label{sec:tensor}
In this section we shall give a formula for decomposing $V(p(x))\otimes E$ when $E$ is a finite-dimensional and $V(p(x))$ is simple.

Let $L(k)$ be the unique simple $\fr{sl}_2$ module of dimension $k+1$. For natural numbers $k\geq m$ we then have
\[L(k)\otimes L(m) \simeq L(k+m) \oplus L(k+m-2) \oplus \cdots \oplus L(k-m),\]
which is known as the Clebsch-Gordan formula (see for example~\cite{Maz1}). 

Recall that $L(1)$ is isomorphic to the natural module; it has a basis $\{e_1,e_2\}$ on which $\fr{sl}_2$ acts by $e_{ij}\cdot e_k=\delta_{jk}e_i$. Then 
\[V(p)\otimes L(1)=\{(f,g):=f\otimes e_1 + g\otimes e_2 \: | \: f,g\in \K[x]\},\]
and using Proposition~\ref{sl2def} we see that the $\fr{sl}_2$ action on the tensor product is given by
\begin{align*}
x \cdot (f,g) &= \big(xf+g,xg\big),\\
h \cdot (f,g) &= \big((p+1)f+2xf',(p-1)g+2xg'\big),\\
y \cdot (f,g) &= \big(qf-pf'-xf'',qg-pg'-xg''+f\big).
\end{align*}

\begin{lemma}
\label{L1tensor}
For any polynomial $p$ we have
\[V(p)\otimes L(1) \simeq V(p-1)\oplus V(p+1).\]
\end{lemma}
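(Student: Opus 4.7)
The plan is to realize $V(p+1)$ and $V(p-1)$ as explicit submodules of $V(p)\otimes L(1)$ generated by distinguished vectors, and then to show that together they give a direct-sum decomposition. First I would search for a vector $v_+\in V(p)\otimes L(1)$ that plays the role of the generator $1\in V(p+1)$. Since $V(p+1)$ is cyclic over $U(\fr{n})=\K[x]$, any $\fr{sl}_2$-linear map out of $V(p+1)$ is determined by $v_+$, and it suffices to impose
\[
h\cdot v_+=(p+1)\cdot v_+\quad\text{and}\quad y\cdot v_+=q_{p+1}\cdot v_+,
\]
where the right-hand sides are interpreted via the $\K[x]$-action on $V(p)\otimes L(1)$, and $(p+1), q_{p+1}$ denote the polynomial coefficients prescribed by the definition of $V(p+1)$. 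Writing $v_+=(a,b)$ and expanding with the tensor-product formulas from the proof above, the $h$-condition becomes the pair of ODEs $xb'=b$ and $2xa'=p'b$, forcing $b=cx$ and $a=c(p+K)/2$; the second component of the $y$-condition then pins down $K=p(0)$. Invoking the defining identity $(2xq)'=-(pp'+xp'')$ for $q$ in Proposition~\ref{sl2def}, I would check that the first component of the $y$-condition holds automatically. Normalizing, this gives $v_+=\bigl(\tfrac{p+p(0)}{2},\,x\bigr)$.

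A symmetric computation produces $v_-=\bigl(\tfrac{p-p(0)}{2x},\,1\bigr)\in V(p)\otimes L(1)$ playing the role of the generator $1\in V(p-1)$. By cyclicity, the rules $\iota_\pm(f):=f\cdot v_\pm$ extend uniquely to $\fr{sl}_2$-linear maps $\iota_\pm\colon V(p\pm 1)\to V(p)\otimes L(1)$, and each is injective because the second component of $v_\pm$ is a nonzerodivisor in $\K[x]$.

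The main step, which I expect to be the principal technical obstacle, is to verify that the combined map $\Phi=\iota_+\oplus\iota_-\colon V(p+1)\oplus V(p-1)\to V(p)\otimes L(1)$ is bijective. As a $\K[x]$-linear map between free $\K[x]$-modules of rank $2$, $\Phi$ is a $2\times 2$ matrix of differential operators, and I would invert it directly. Setting $\Phi(f,g)=(F,G)$, the second coordinate immediately gives $g=G-xf$; substituting into the first coordinate collapses the system to a single scalar equation for $f$ whose leading coefficient is a nonzero element of $\K$. Solving then expresses $f$ and $g$ as polynomials in $F,G$ and their derivatives, so $\Phi$ is a $\K[x]$-module isomorphism, and the $\fr{sl}_2$-linearity secured in the previous steps promotes this to the claimed isomorphism of $\fr{sl}_2$-modules. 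The hard parts will be the $y$-consistency check in Step~1 (where the ODE defining $q$ is used) and correctly setting up and inverting the final operator system.
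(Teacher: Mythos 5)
Your proposal follows essentially the same route as the paper: your generators $v_+=\bigl(\tfrac{p+p(0)}{2},x\bigr)$ and $v_-=\bigl(\tfrac{p-p(0)}{2x},1\bigr)$ are exactly the paper's $\psi(1)$ and $\varphi(1)$, the identification of the two submodules with $V(p\pm1)$ via the $h$- and $y$-conditions on the generators is the same computation, and the final step is the same direct-sum verification.

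There is, however, one concretely false assertion in your last step. Writing $\Phi(f,g)=(F,G)$ with $G=xf+g$ and substituting $g=G-xf$ into the first coordinate, the terms $xf'$ cancel and the "scalar equation" for $f$ collapses to $(p(0)-1)f=F-\tfrac{p-p(0)}{2x}G-G'$. So the coefficient you need to invert is $p(0)-1$, not a generically nonzero element of $\K$, and your inversion (hence the whole argument) fails precisely when $p(0)=1$. In that case one actually has $\iota_+(f)=\iota_-(xf)$ for all $f$, so $\mathrm{Im}\,\iota_+\subseteq\mathrm{Im}\,\iota_-$ and $\Phi$ is neither injective nor surjective; one only gets a short exact sequence $0\to V(p-1)\to V(p)\otimes L(1)\to V(p+1)\to 0$, which for $p\equiv 1$ is non-split ($V(1)$ is a simple projective lowest weight Verma in a semisimple block, so $V(1)\otimes L(1)$ is the indecomposable big projective). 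To be fair, the paper's own proof has the identical restriction — it writes "since $p(0)\neq 1$" at the corresponding point without justifying why that case may be ignored — so this is a defect of the lemma as stated rather than of your strategy. You should either add the hypothesis $p(0)\neq 1$ or treat that case separately; as written, the claim "leading coefficient is a nonzero element of $\K$" is where your proof would break.
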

\begin{proof}
The action of $\K[x]$ on the tensor product can be written $r(x)\cdot (f,g)=(rf+r'g,rg)$, so any submodule isomorphic to $\K[x]$ can be generated by a single element $(f,g)$. By taking $(f,g):=(\tfrac{1}{2x}(p-p(0)),1)$ we get a $\K[x]$-submodule as follows:
Define $\varphi: \K[x] \ra V(p)\otimes L(1)$ by \[\varphi(f):= (\tfrac{1}{2x}(p-p(0))f+f',f).\]
Then \[\ol{V}:=\mathrm{Im}\; \varphi=\{\varphi(f)=(\tfrac{1}{2x}(p-p(0))f+f',f) \in V(p)\otimes L(1) \: | \: f\in \K[x]\}\]
is a $\K[X]$-submodule in which $x\cdot \varphi(f)=\varphi(xf)$.
We claim that $\ol{V}$ in fact is an $\fr{sl}_2$-submodule. Since $\varphi(f)=f \cdot \varphi(1)$ it suffices to verify that $h\cdot \varphi(1)\in \ol{V}$ and $h\cdot \varphi(1)\in \ol{V}$. We have
\begin{align*}
h\cdot \varphi(1)&=h\cdot (\tfrac{1}{2x}(p-p(0)),1)=\big((p+1)\tfrac{1}{2x}(p-p(0))+2x\tfrac{1}{2x}(p-p(0))',p-1\big)\\
 &=\big((p+1)\tfrac{1}{2x}(p-p(0))-\tfrac{1}{x}(p-p(0))+p',p-1\big)\\
 &=\big((p-1)\tfrac{1}{2x}(p-p(0))+p',p-1\big) = \varphi(p-1).
\end{align*}
Next we claim that $y\cdot \varphi(1) = \varphi(q+\tfrac{1}{2x}(p-p(0)))$. To see this first note that
 \[-2xq'=(-2xq)'+2q = \dd{x}\big(\int_{0}^x (pp'+xp'')dt\big)+2q=pp'+xp''-p(0)p'(0)+2q.\]
We now calculate
\begin{align*}
(y\cdot \varphi(1))&-\varphi(q+\tfrac{1}{2x}(p-p(0)))=\\
 =&\Big((p-p(0))\big(\tfrac{q}{2x}+\tfrac{p}{2x^2}-\tfrac{1}{x^2}\big)+\tfrac{p'}{2x}(2-p)-\tfrac{p''}{2} , q+\tfrac{1}{2x}(p-p(0))\Big)\\
 &-\Big(\tfrac{1}{2x}(p-p(0))\big(q+\tfrac{1}{2x}(p-p(0))+q'-\tfrac{1}{2x^2}(p-p(0))+\tfrac{p'}{2x}\big) , q+\tfrac{1}{2x}(p-p(0))\Big).
\end{align*}
In this difference the second component is indeed zero and in the first component we obtain the following after multiplying by $2x$:
\begin{equation*}
-2xq'+(p-p(0))\big(\tfrac{p}{x}-\tfrac{1}{x}-\tfrac{1}{2x}(p-p(0))\big)-pp'+p'-xp'',
\end{equation*}
and we need to show that this is zero too. Inserting our expression above for $-2xq'$  and multiplying again by $x$ we get
\begin{equation*}
2xq-xp(0)p'(0)+(p-p(0))\big(\tfrac{p+p(0)}{2}-1\big)+xp',
\end{equation*}
which is zero when $x=0$. The derivative is
\begin{equation*}
-\big(pp'+xp''-p(0)p'(x)\big)-p(0)p'(0)+p'\big(\tfrac{p+p(0)}{2}-1\big)+(p-p(0))\tfrac{p'}{2}+xp''+p'=0.
\end{equation*}
Hence we have shown that $y \cdot \varphi(1) = \varphi(q+\tfrac{1}{2x}(p-p(0))) \in \ol{V}$, and it follows that $\ol{V}$ is an $\fr{sl}_2$-submodule.

Next we define $\psi:\K[x] \ra V(p)\otimes L(1)$ by
\[\psi(f)=(\tfrac{1}{2}(p+p(0))f+xf',xf)\] so that
\[\tilde{V}:=\mathrm{Im}\: \psi=\{\psi(f)=(\tfrac{1}{2}(p+p(0))f+xf',xf) \in V(p)\otimes L(1) \: | \: f\in \K[x]\}.\]
We claim that $\tilde{V}$ is a $\fr{sl}_2$ submodule complementary to $\ol{V}$.
Verification of this is analogous to the calculations above and we omit it here. We note however that
$x\cdot \psi(f)=\psi(xf)$, $h\cdot \psi(1)=\psi(p+1)$ and $y \cdot \psi(1) = \psi(q-\tfrac{1}{2x}(p-p(0)))$.

By Propositions~\ref{uniprop} and~\ref{exhaustprop} it follows from the facts that $h\cdot \varphi(1)=\varphi(p-1)$ and $h\cdot \psi(1)=\psi(p+1)$ that $\varphi$ is an isomorphism $V(p-1) \ra \ol{V}$ and that $\psi$ is an isomorphism $V(p+1) \ra \tilde{V}$.

Finally we verify that $V(p)=\ol{V}\oplus \tilde{V}$. 
Since
\[\varphi(xf)-\psi(f)=(\tfrac{1}{2}(p-p(0))f+(xf)',xf)-(\tfrac{1}{2}(p+p(0))f+xf',xf)=((1-p(0))f,0),\]
and since $p(0)\neq 1$ we have $(\K[x],0) \subset \ol{V}+\tilde{V}$ and then clearly also $(0,\K[x])\subset \ol{V}+\tilde{V}$ since we may form $\varphi(g)-(\tfrac{1}{2x}(p-p(0))g+g',0)=(0,g)$. Thus $V(p)\otimes L(1)=\ol{V}+\tilde{V}$. Next, assume that $\varphi(g)=\psi(f)$, then $g=xf$ and
$(\tfrac{1}{2}(p-p(0))f+(xf)',xf)=(\tfrac{1}{2}(p+p(0))f+xf',xf)$
so $\tfrac{1}{2}(p-p(0))f+xf'+f=\tfrac{1}{2}(p+p(0))f+xf'$ implying $(1-p(0))f=0$. Since $p(0)\neq 1$ we get $f=g=0$ showing that $\ol{V} \cap \tilde{V}=\{0\}$. Thus $V(p)\otimes L(1)=\ol{V}\oplus \tilde{V}$.
\end{proof}

Since each finite-dimensional module $E$ is a direct sum of modules $L(k)$, the following proposition determines the tensor product decomposition of $V(p)\otimes E$ completely.
\begin{prop}
\label{tensorprop}
We have
\[V\big(p(x)\big)\otimes L(k) = \bigoplus_{i=0}^k V\big(p(x)+k-2i\big).\]
\end{prop}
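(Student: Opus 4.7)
The approach is induction on $k$, using Lemma~\ref{L1tensor} as the base case together with the classical Clebsch--Gordan rule $L(k-1)\otimes L(1)\cong L(k)\oplus L(k-2)$ for the inductive step. The case $k=0$ is trivial and $k=1$ is precisely Lemma~\ref{L1tensor}. For $k\geq 2$, tensoring the Clebsch--Gordan decomposition with $V(p)$ gives
\[V(p)\otimes L(k)\;\oplus\; V(p)\otimes L(k-2)\;\cong\; V(p)\otimes L(k-1)\otimes L(1),\]
so the plan is to compute the right-hand side via the inductive hypothesis and then cancel off the known summand $V(p)\otimes L(k-2)$.

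Applying the inductive hypothesis to $V(p)\otimes L(k-1)$, then tensoring with $L(1)$ and applying Lemma~\ref{L1tensor} to each summand, the right-hand side becomes
\[\bigoplus_{i=0}^{k-1}\bigl(V(p+k-2i)\oplus V(p+k-2-2i)\bigr).\]
A routine reindexing in which the extreme terms $V(p+k)$ and $V(p-k)$ each appear once and every intermediate $V(p+k-2j)$ appears twice identifies this with
\[\bigoplus_{j=0}^{k}V(p+k-2j)\;\oplus\;\bigoplus_{j=0}^{k-2}V(p+(k-2)-2j),\]
and the second summand is exactly $V(p)\otimes L(k-2)$ by the inductive hypothesis.

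The main obstacle is justifying the cancellation of $V(p)\otimes L(k-2)$ from both sides, for which I would invoke Krull--Schmidt. Three ingredients are needed. First, each $V(q)$ is indecomposable: by the previous proposition it has length at most $2$, and any nontrivial splitting would produce a finite-dimensional direct summand, contradicting the fact that $V(q)$ is free of rank $1$ over $\mathbb{K}[x]$. Second, distinct $V(q)$ are pairwise non-isomorphic by Proposition~\ref{uniprop}. Third, the modules $V(p)\otimes L(k)$ have finite length as $\fr{sl}_2$-modules, which follows from the length bound on $V(p)$ combined with the standard fact that tensoring with a finite-dimensional module is exact and preserves finite length. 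With these three facts in hand, Krull--Schmidt in the category of finite-length $\fr{sl}_2$-modules legitimizes the cancellation and closes the induction.
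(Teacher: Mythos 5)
Your proof is correct and follows essentially the same route as the paper: induction on $k$, comparing the expansion of a triple tensor product obtained from Lemma~\ref{L1tensor} and the inductive hypothesis with the one obtained from the Clebsch--Gordan rule, and cancelling the common summand. You go one step further than the paper by explicitly justifying the cancellation via Krull--Schmidt (indecomposability, pairwise non-isomorphism, and finite length of the summands), a point the paper's proof leaves implicit.
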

\begin{proof}
We proceed by induction. The statement holds trivially for $k=0$, and also for $k=1$ by Lemma~\ref{L1tensor}. Using the inductive assumption and Lemma~\ref{L1tensor} we find that
\begin{align*}
\big(V(p)&\otimes L(k)\big)\otimes L(1) = \bigoplus_{i=0}^k \big(V(p+k-2i)\otimes L(1)\big)\\
&=\bigoplus_{i=0}^k \big(V(p+(k+1)-2i)\oplus V(p+(k-1)-2i)\big)\\
&=\bigoplus_{i=0}^{k} V(p+(k+1)-2i) \oplus V(p-k-1) \oplus \bigoplus_{i=0}^{k-1} V(p+(k-1)-2i)\\
&=\bigoplus_{i=0}^{k+1} V(p+(k+1)-2i) \oplus \bigoplus_{i=0}^{k-1} V(p+(k-1)-2i).
\end{align*}
But on the other hand we can use the Clebsch-Gordan formula to obtain
\begin{align*}
\big(V(p)&\otimes L(k)\big)\otimes L(1) = V(p)\otimes \big(L(k)\otimes L(1)\big)=V(p)\otimes \big(L(k+1)\oplus L(k-1)\big)\\
&=\big(V(p)\otimes L(k+1)\big) \oplus  \bigoplus_{i=0}^{k-1} V\big(p+(k-1)-2i\big).
\end{align*}
By cancelling the isomorphic summands $\bigoplus_{i=0}^{k-1} V\big(p+(k-1)-2i\big)$ in the two above expressions we finally get
\[V(p)\otimes L(k+1) =  \bigoplus_{i=0}^{k+1} V\big(p(x)+(k+1)-2i\big),\]
and the statement of the proposition follows by induction.
\end{proof}

\section{$\fr{sl}_{n+1}$-modules}
In this section we generalize most of the results of the previous section from $\fr{sl}_2$ to $\fr{sl}_{n+1}$.
\label{sec:sln}
\subsection{Preliminaries}
For $1\leq i,j\leq n+1$, let $e_{ij}$  be the standard basis for $\fr{gl}_{n+1}$, and recall that these satisfy $[e_{ij},e_{kl}]=\delta_{jk}e_{il}-\delta_{li}e_{jk}$. Let $\fr{n} \subset \fr{sl}_{n+1}$ be the subalgebra with basis $\{e_{i,n+1}\; | \; 1\leq i \leq n \}$. Since $\fr{n}$ is abelian we have $U(\fr{n})\simeq \K[x_1,\ldots , x_n]$.

For $f\in \pol$ write $f^i:=\dd{x_i}f$ for the partial derivative, and define degree operators
\[d,d_i:\pol \ra \pol \text{ by } d_i(f)=x_if^i \text{ and } d(f)=\sum_{i=1}^n x_if^i.\]
Note that $d_i$ and $d$ are $\K$-linear derivations of $\pol$. We also note that $d$ is invertible on the space of polynomials with zero constant term. We define maps $d',d_i':\pol \ra \pol$ by
\[d'(x_1^{a_1} \cdots x_n^{a_n})=\frac{1}{\sum a_i}x_1^{a_1} \cdots x_n^{a_n} \text{ when } \sum a_i>0, \text{ and } d'|_{\K}:=\mathrm{id}.\]
and
\[d_i'(x_1^{a_1} \cdots x_n^{a_n})=\frac{1}{a_i}x_1^{a_1} \cdots x_n^{a_n} \text{ when } a_i>0, \text{ and } d_i'(x_1^{a_1} \cdots x_n^{a_n})=x_1^{a_1} \cdots x_n^{a_n} \text{ when } a_i=0.\]
Then the following lemma is easy to verify.
\begin{lemma}
\label{dlemma}
For all $1\leq i,j\leq n$ we have
\begin{align*}
d \circ d'(f)  &= f-f(0,\ldots, 0)= d' \circ d(f),\\
d_i \circ d_i'(f)  &= f-f(x_1,\ldots,0_i,\ldots,x_n)= d_i' \circ d_i(f),\\
d(x_i\dd{x_j}f)&=x_i\dd{x_j}d(f),\\
[\dd{x_i},d]&=\dd{x_i}.\\
\end{align*} 
\end{lemma}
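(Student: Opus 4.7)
The plan is to verify each of the four identities directly; all four are routine consequences of the fact that $d$ is the Euler operator $\sum_k x_k \partial_k$ and $d_i = x_i \partial_i$, so they act diagonally on the monomial basis of $\pol$.

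For the first two identities I would reduce to monomials by $\K$-linearity. On a monomial $x_1^{a_1}\cdots x_n^{a_n}$ one has $d(x^a) = (\sum a_i) x^a$, so when $\sum a_i > 0$ both $d\circ d'$ and $d'\circ d$ send $x^a$ to itself, matching $x^a - x^a|_{x=0}$; when $\sum a_i = 0$ (i.e.\ $x^a$ is a constant) the right-hand side is $0$ and both compositions give $0$ (using $d'|_\K = \mathrm{id}$ together with $d(c) = 0$). The second identity is handled by the same bookkeeping with $a_i$ in place of $\sum a_i$, splitting into the two cases $a_i>0$ and $a_i = 0$ specified in the definition of $d_i'$.

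For the third identity I would observe that the statement is equivalent to the commutator identity $[d, x_i \partial_j] = 0$. Using the Leibniz rule one computes
\[
[x_k \partial_k,\, x_i\partial_j] = x_k \delta_{ki} \partial_j - x_i \delta_{kj} \partial_k,
\]
so summing over $k$ gives $x_i\partial_j - x_i\partial_j = 0$. Alternatively, since $x_i \partial_j$ preserves total degree, it commutes with the Euler operator $d$ when restricted to each homogeneous component, which gives the identity by linearity. The fourth identity is the single commutator computation
\[
[\partial_i,\, d] = \sum_k [\partial_i, x_k]\partial_k = \sum_k \delta_{ik}\partial_k = \partial_i.
\]

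There is no real obstacle here: the only point requiring a little care is the boundary case in the definitions of $d'$ and $d_i'$ (where $d'$ acts as identity on $\K$ and $d_i'$ acts as identity on polynomials independent of $x_i$), so I would state each identity's verification as a two-case check on monomials to make sure constants and $x_i$-free monomials behave correctly. Everything else is a one-line computation.
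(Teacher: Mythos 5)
Your verification is correct, and it is exactly the routine monomial-by-monomial check the paper has in mind: the paper gives no proof at all, stating only that the lemma ``is easy to verify.'' Your careful handling of the boundary cases for $d'$ and $d_i'$ and the two commutator computations fill in precisely what was left to the reader.
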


For $1\leq i \leq n$ define $x_i:=e_{i,n+1}$. Let $I:=\sum_{i=1}^{n+1}e_{ii}$ and for $1\leq i \leq n+1$ define $h_i:=e_{ii}-\frac{1}{n+1}I$ and $\ol{h}=\sum_{i=1}^{n}h_i$. Note that $h_{n+1}=-\sum_{i=1}^n h_i=-\ol{h}$.

We fix the basis $\{e_{ij}\;|\;1\leq i,j \leq n+1; i\neq j\}\cup\{h_1,\ldots ,h_n\}$ for $\fr{sl}_{n+1}$ and note that for $1\leq i,j\leq n$ and $i\neq j$ we have 
\[[h_i,x_j]=\delta_{ij}x_j \text{ and } [x_i,e_{n+1,i}]=\ol{h}+h_i.\] The following lemma tells us how to commute elements of $\fr{sl}_{n+1}$ with the $x_i$.

\begin{lemma}
\label{Urel}
The following relations hold in $U(\fr{sl}_{n+1})$ for all $1 \leq i,j,k \leq n$ with $i\neq j$ and for all $m\in \bb{N}$.
\begin{align}
x_ix_k^m&=x_k^mx_i \label{eq:1}\\
h_ix_k^m&=x_k^mh_i +\delta_{ik}mx_k^m \label{eq:2}\\
e_{ij}x_k^m&=x_k^me_{ij}+\delta_{kj}mx_ix_k^{m-1} \label{eq:3}\\
e_{n+1,i}x_j^m&=x_j^me_{n+1,i} - mx_j^{m-1}e_{ji} \label{eq:4}\\
e_{n+1,i}x_i^m&=x_i^me_{n+1,i} - mx_i^{m-1}(\ol{h}+h_i)-m(m-1)x_i^{m-1} \label{eq:5}
\end{align}
\end{lemma}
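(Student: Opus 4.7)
The plan is to prove all five relations by induction on $m$, with the base case $m=1$ amounting to a single bracket computation in $\fr{sl}_{n+1}$ via the standard formula $[e_{ij}, e_{kl}] = \delta_{jk} e_{il} - \delta_{li} e_{jk}$ stated at the start of the section.

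Relation~(\ref{eq:1}) is immediate since $x_i, x_k \in \fr{n}$ and $\fr{n}$ is abelian. For relations~(\ref{eq:2})--(\ref{eq:4}) I compute the base cases as $[h_i, x_k] = \delta_{ik} x_k$, and (for $i \neq j$) $[e_{ij}, x_k] = \delta_{jk} x_i$ (the potentially competing term $\delta_{i,n+1} e_{kj}$ vanishes because $i \leq n$), and $[e_{n+1,i}, x_j] = -e_{ji}$. In each case the induction step writes $X x_k^{m+1} = (X x_k) x_k^m$, applies the $m=1$ identity to pull $X$ across the first $x_k$, and then invokes the inductive hypothesis on the remaining $x_k^m$. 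For relation~(\ref{eq:4}) one additionally needs that $e_{ji}$ commutes with $x_j$, which follows from $[e_{ji}, e_{j,n+1}] = 0$ (both Kronecker deltas vanish because $i \neq j$ and $j \leq n$); the two resulting correction terms then combine into the stated coefficient $-(m+1)$.

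Relation~(\ref{eq:5}) is the main obstacle and the only part requiring real care. The base case gives $[e_{n+1,i}, x_i] = e_{n+1,n+1} - e_{ii} = h_{n+1} - h_i = -(\ol{h} + h_i)$, so the correction term is no longer a multiple of $x_i$ but a Cartan element that must itself be commuted through the surviving powers of $x_i$ during the induction. Using relation~(\ref{eq:2}) already proved and summing over $k = 1, \ldots, n$, one obtains $(\ol{h} + h_i) x_i^m = x_i^m(\ol{h} + h_i) + 2m\, x_i^m$, the scalar $2m$ coming from $\ol{h}$ and $h_i$ each contributing $m$ when pushed past $x_i^m$. Substituting this into the inductive step $e_{n+1,i} x_i^{m+1} = \bigl(x_i e_{n+1,i} - (\ol{h} + h_i)\bigr) x_i^m$ and applying the inductive hypothesis to the first summand, the coefficient of $x_i^m(\ol{h} + h_i)$ is upgraded from $-m$ to $-(m+1)$, while the scalar coefficient of $x_i^m$ becomes $-(m(m-1) + 2m) = -(m+1)m$, matching the claim. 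The essential point to recognize in (\ref{eq:5}) is that $\ol{h} + h_i$ has adjoint eigenvalue $2$ on $x_i$, and this eigenvalue is precisely what promotes $m(m-1)$ to $(m+1)m$ in the induction.
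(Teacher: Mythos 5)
Your proof is correct and follows essentially the same route as the paper: induction on $m$ from the basic commutators, with relation~\eqref{eq:5} handled by tracking how $\ol{h}+h_i$ (adjoint eigenvalue $2$ on $x_i$) is pushed past the remaining powers of $x_i$. The only cosmetic difference is that you peel off the leftmost factor $x_i$ and then invoke the inductive hypothesis (together with \eqref{eq:2} for general $m$), whereas the paper peels off the rightmost factor and only needs the $m=1$ commutators in the inductive step; both yield the same coefficients $-(m+1)$ and $-m(m+1)$.
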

\begin{proof}
These relations can be easily proved by induction on $m$. We verify only Equation~\eqref{eq:5}. It holds trivially for $m=0$ and also for $m=1$. Assuming it holds for a fixed $m$, we have
\begin{align*}
e_{n+1,i}x_i^{m+1}&=(x_i^me_{n+1,i} - mx_i^{m-1}(\ol{h}+h_i)-m(m-1)x_i^{m-1})x_i\\
	&=x_i^{m}\big(x_ie_{n+1,i}-(\ol{h}+h_i)\big) - mx_i^{m-1}\big(x_i(\ol{h}+h_i)+2x_i\big)-m(m-1)x_i^{m}\\
	&=x_i^{m+1}e_{n+1,i}-(m+1)x_i^m(\ol{h}+h_i)-m(m+1)x_i^{m},
\end{align*}
where in the second equality we used Equations~\eqref{eq:2} and~\eqref{eq:5} for $m=1$. Thus Equation~\eqref{eq:5} holds for all $m\in\bb{N}$ by induction.
\end{proof}

\begin{cor}
\label{Ucor}
Let $f\in \pol = U(\fr{n})$. The following relations hold in $U(\fr{sl}_{n+1})$ for all $1 \leq i,j \leq n$ with $i\neq j$.
\begin{align}
x_if&=fx_i \label{cor:1}\\
h_if&=fh_i + x_if^i \label{cor:2}\\
e_{ij}f&=fe_{ij}+x_if^j \label{cor:3}\\
e_{n+1,i}f&=fe_{n+1,i}-\sum_{k\neq i}f^ke_{ki}-f^i(\ol{h}+h_i)-d(f^i) \label{cor:4}
\end{align}
\end{cor}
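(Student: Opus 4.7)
The plan is to reduce each of the four relations to the monomial case via $\K$-linearity in $f$, and then to iterate the single-variable identities of Lemma~\ref{Urel}.

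Relation~\eqref{cor:1} is immediate from the commutativity of $\fr{n}$. For~\eqref{cor:2} and~\eqref{cor:3}, I would commute $h_i$ (respectively $e_{ij}$) through a monomial $f = x_1^{a_1}\cdots x_n^{a_n}$ one variable factor at a time, applying~\eqref{eq:2} (resp.~\eqref{eq:3}) at each step. Since each correction term lies in $\pol$, it commutes with the remaining factors and no further bookkeeping is needed: the sum of the corrections collapses to $a_i f = x_i f^i$ in the first case and to $a_j x_i (f/x_j) = x_i f^j$ in the second, which are exactly the advertised right-hand sides.

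For relation~\eqref{cor:4} I would proceed by induction on the total degree $\sum_k a_k$. The base case $f=1$ is trivial. In the inductive step, write $f = x_j g$ and rewrite $e_{n+1,i} x_j$ using~\eqref{eq:4} if $j\neq i$ or~\eqref{eq:5} if $j=i$, then substitute the inductive formula for $e_{n+1,i} g$. The resulting expression contains intermediate products such as $e_{ji} g$, $(\ol{h}+h_i) g$, and $g^k e_{ki}$ that need to be rewritten with all Lie-algebra elements placed to the right; this is precisely what~\eqref{cor:2} and~\eqref{cor:3} (already proved) do for us. Comparing the outcome with the claimed right-hand side at $f = x_j g$, the polynomial corrections regroup using the Leibniz-type identities $(x_j g)^k = \delta_{jk} g + x_j g^k$ together with the derivation rule $d(x_j h) = x_j h + x_j d(h)$ from Lemma~\ref{dlemma}.

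The main obstacle will be the subcase $j=i$ of the induction. In this case~\eqref{eq:5} at $m=1$ contributes the correction $-(\ol{h}+h_i)$, and then commuting $(\ol{h}+h_i)$ past $g$ via~\eqref{cor:2} introduces an extra term $d(g) + x_i g^i$. These must combine with the $-x_i d(g^i)$ coming from the inductive hypothesis to yield precisely $-d(x_i g^i + g) = -d\bigl((x_i g)^i\bigr)$ on the right-hand side. The required identity reduces immediately to the derivation property of $d$, but tracking all of the terms carefully enough to see the cancellations is the delicate point of the proof.
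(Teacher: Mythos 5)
Your proposal is correct and follows essentially the same route as the paper: reduce to monomials by linearity, handle \eqref{cor:1}--\eqref{cor:3} by direct application of Lemma~\ref{Urel}, and prove \eqref{cor:4} by induction on the monomial, splitting into the cases $j\neq i$ and $j=i$ and using the already-established relations \eqref{cor:2}--\eqref{cor:3} together with the Leibniz rule and the derivation property of $d$ to regroup the correction terms. You have also correctly identified the $j=i$ subcase, where the $-(\ol{h}+h_i)$ term from \eqref{eq:5} must be commuted past $g$ and absorbed into $-d\bigl((x_ig)^i\bigr)$, as the only delicate cancellation.
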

\begin{proof}
Since the above formulas are linear in $f$, it suffices to prove them when $f$ is a monomial. We do by using Lemma~\ref{Urel} repeatedly.  Equations~\eqref{cor:1}-\eqref{cor:2} follows easily from Equations~\eqref{eq:1}-\eqref{eq:2}. For \eqref{cor:3} we take $f=\prod x_k^{a_k}$ and compute
\begin{align*}
e_{ij}f&=(e_{ij}x_j^{a_j})\tprod_{k\neq j}x_k^{a_k}=(x_j^{a_j}e_{ij}+a_jx_ix_j^{a_j-1})\tprod_{k\neq j}x_k^{a_k}\\
&= x_j^{a_j}\tprod_{k\neq j}x_k^{a_k}e_{ij}+a_jx_ix_j^{a_j-1}\tprod_{k\neq j}x_k^{a_k}=fe_{ij}+x_if^j
\end{align*}
where we used that $e_{ij}$ commutes with $x_k$ for $k\neq j$. For \eqref{cor:4} we instead proceed by induction. The equation holds for $f=1$ and assuming that the equation holds for a fixed monomial $f$ it suffices to prove that it holds when we replace $f$ by $x_jf$. We divide this into cases: for $j\neq i$ we have

\begin{align*}
(x_jf)e_{n+1,i}&-\sum_{k\neq i}(x_jf)^ke_{ki}-(x_jf)^i(\ol{h}+h_i)-d((x_jf)^i)\\
     &=x_jfe_{n+1,i}-\big(x_j\sum_{k\neq i}f^ke_{ki}+fe_{ji}\big)-x_jf^i(\ol{h}+h_i)-x_j(d(f^i)+f^i)\\
     &=x_j\big(fe_{n+1,i}- \sum_{k\neq i}f^ke_{ki}- f^i(\ol{h}+h_i) -d(f^i) \big) -fe_{ji}-x_jf^i\\
     &=x_je_{n+1,i}f-e_{ji}f=(x_je_{n+1,i}-e_{ji})f=e_{n+1,i}(x_jf)
\end{align*}
And similarly, if we instead take $j=i$ we have
\begin{align*}
(x_if)e_{n+1,i}&-\sum_{k\neq i}(x_if)^ke_{ki}-(x_if)^i(\ol{h}+h_i)-d((x_if)^i)\\
     &=x_ife_{n+1,i}-x_i\sum_{k\neq i}f^ke_{ki}-(x_if^i+f)(\ol{h}+h_i)-x_i(d(f^i)+f^i)-d(f)\\
     &=x_i\big(fe_{n+1,i}- \sum_{k\neq i}f^ke_{ki} -f^i(\ol{h}+h_i) d(f^i) \big) -f(\ol{h}+h_i)-x_if^i-d(f)\\
     &=x_ie_{n+1,i}f-(\ol{h}+h_i)f=(x_ie_{n+1,i}-(\ol{h}+h_i))f=e_{n+1,i}(x_if)
\end{align*}
Therefore Equation~\eqref{cor:4} holds by induction.
\end{proof}

\subsection{Classification}
\label{sec:class}
In this section we shall study all possible $\fr{sl}_{n+1}$-modules $M$ such that $\Res^{U(\fr{sl}_{n+1})}_{U(\fr{n})}M$ is free of rank $1$. 

Let $M$ be a given $\fr{sl}_{n+1}$-modules $M$ such that $\Res^{U(\fr{sl}_{n+1})}_{U(\fr{n})}$ is free of rank $1$. 
Without loss of generality we may assume that $M=\pol$ as a vector space and that $e_{i,n+1}\cdot f=x_if$ for $f\in M$.
For $1\leq i,j\leq n$ and $i\neq j$, define $p_{ij}:=e_{ij}\cdot 1$, $p_{ii}:=h_{i}\cdot 1$ and $q_{i}:=e_{n+1,i}\cdot 1$. Also let $\ol{p}:=\sum_{i=1}^n p_{ii}$ and $p:=d'(\ol{p})$.
\begin{lemma}
\label{act}
The elements $p_{ij},q_i\in \pol$ uniquely determines the module structure on $M$. Explicitly, for $i\neq j$ we have
\begin{align*}
x_i\cdot f&=x_if \\
h_i\cdot f&=p_{ii}f + x_if^i \\
e_{ij}\cdot f&=p_{ij}f+x_if^j \\
e_{n+1,i}\cdot f&=q_if-\sum_{k=1}^np_{ki}f^k-\ol{p}f^i-d(f^i)\\
&=q_if-\sum_r(p_{ri}f^r+p_{rr}f^i+x_rf^{ir})
\end{align*}
where $\ol{p}=\sum_{i=1}^n p_{ii}$.
\end{lemma}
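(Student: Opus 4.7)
The plan is to leverage the freeness hypothesis together with Corollary~\ref{Ucor}. Since $\Res^{U(\fr{sl}_{n+1})}_{U(\fr{n})}M$ is free of rank $1$, after the identification $M = \pol$ as $U(\fr{n})$-modules, the element $1\in M$ is a cyclic $U(\fr{n})$-generator and every $m\in M$ has a unique expression $m = f\cdot 1$ with $f\in\pol$. The assumption $e_{i,n+1}\cdot f = x_if$ is built into this identification and gives the first formula immediately.

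The key observation is that for any $X\in\fr{sl}_{n+1}$ and any $f\in U(\fr{n})\subset U(\fr{sl}_{n+1})$, one may write $Xf = fX + R(X,f)$ in the enveloping algebra, where $R(X,f)$ is the ``commutator remainder'' supplied by Corollary~\ref{Ucor}. Applying both sides to $1\in M$ gives
\[
X\cdot f \;=\; X\cdot(f\cdot 1)\;=\;(Xf)\cdot 1 \;=\; f\cdot(X\cdot 1) + R(X,f)\cdot 1.
\]
Thus it is enough to know $X\cdot 1$, which by definition is $p_{ii}$, $p_{ij}$, or $q_i$ respectively, and to evaluate $R(X,f)\cdot 1$, which by Corollary~\ref{Ucor} already lies (up to elements of $U(\fr{n})$ that act by polynomial multiplication) in an explicit form.

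Concretely, for $X=h_i$ equation~\eqref{cor:2} yields $R(h_i,f)=x_if^i\in U(\fr{n})$, which acts on $1$ by multiplication, giving $h_i\cdot f = p_{ii}f + x_if^i$. For $X=e_{ij}$ with $i\neq j$, equation~\eqref{cor:3} likewise gives $e_{ij}\cdot f = p_{ij}f + x_if^j$. For $X=e_{n+1,i}$, equation~\eqref{cor:4} gives
\[
R(e_{n+1,i},f) \;=\; -\sum_{k\neq i}f^k e_{ki} - f^i(\ol{h}+h_i) - d(f^i).
\]
Applied to $1$, each $e_{ki}$ produces $p_{ki}$, while $\ol{h}\cdot 1 = \sum_j p_{jj} = \ol{p}$ and $h_i\cdot 1 = p_{ii}$, so $(\ol{h}+h_i)\cdot 1 = \ol{p}+p_{ii}$; the term $d(f^i)=\sum_r x_rf^{ir}$ lies in $U(\fr{n})$ and acts by multiplication. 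Collecting,
\[
e_{n+1,i}\cdot f \;=\; q_if - \sum_{k\neq i}p_{ki}f^k - (\ol{p}+p_{ii})f^i - d(f^i),
\]
and absorbing the $-p_{ii}f^i$ summand into the sum over $k$ (taking now $k=1,\dots,n$) produces exactly the stated first formula. Expanding $\ol{p}=\sum_r p_{rr}$ and $d(f^i)=\sum_r x_rf^{ir}$ gives the second, equivalent, form.

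The only non-mechanical step is bookkeeping in the $e_{n+1,i}$ case: one must be careful that the $k=i$ contribution to $\sum_{k=1}^n p_{ki}f^k$ is precisely the $p_{ii}f^i$ piece absorbed from $(\ol{h}+h_i)\cdot 1$, which is where the precise form of equation~\eqref{cor:4} is essential. Everything else is a direct substitution, and uniqueness of the module structure follows because the action of every basis element of $\fr{sl}_{n+1}$ on the cyclic generator $1$ has been fixed by the data $\{p_{ij},q_i\}$.
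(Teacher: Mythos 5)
Your proof is correct and follows exactly the paper's route: the paper's entire argument is the one-line observation that the formulas follow by applying both sides of each identity in Corollary~\ref{Ucor} to the cyclic generator $1$, which is precisely what you carry out (with the bookkeeping for absorbing the $p_{ii}f^i$ term done correctly).
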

\begin{proof}
This immediately follows by acting on $1\in \pol$ in both sides of each equation of Corollary~\ref{Ucor}.
\end{proof}

Next we determine what relations are required between polynomials $p_{ij}$ and $q_i$ in order that the action in Lemma~\ref{act} should define an $\fr{sl}_{n+1}$-module structure on $\pol$.

First, let $\fr{p}\subset \fr{sl}_{n+1}$ be the parabolic subalgebra spanned by $\{h_1,\ldots,h_n\}$ and all $e_{i,j}$ for $1\leq i \leq n$ and $1\leq j \leq n+1$ where $i\neq j$. 

First we note that 
\begin{align*}
h_i \cdot h_j \cdot f -& h_j \cdot h_i \cdot f=h_i(p_{jj}f+x_jf^j)-h_j(p_{ii}f+x_if^i)\\
=&\quad p_{ii}(p_{jj}f+x_jf^j)+x_i(p_{jj}^if+p_{jj}f^i+x_jf^{ji})\\
&-p_{jj}(p_{ii}f+x_if^i)-x_j(p_{ii}^jf+p_{ii}f^j+x_if^{ij})\\
=&(x_ip_{jj}^i-x_jp_{ii}^j)f
\end{align*}
so the conditions that $h_i \cdot h_j \cdot f - h_j \cdot h_i \cdot f=[h_i,h_j]\cdot f = 0$ for all $f$ reduces to the conditions
\begin{equation}
\label{pii}
x_ip_{jj}^i=x_jp_{ii}^j.
\end{equation}
Summing over $j$ we obtain
$x_i\ol{p}^i=d(p_{ii})$ and applying $d'$ we have
\[d'(x_i\ol{p}^i)=d'\circ d(p_{ii})=p_{ii}-p_{ii}(0,\ldots,0)\]
 Thus each $p_{ii}$ is determined up to addition of a constant by $\ol{p}$.
We conclude that \[p_{ii}=d'(x_i\ol{p}^i)+c_i\]
for some constants $c_i$.

Next, for $1\leq i,j,k \leq n$ with $i\neq j$ we have 
\begin{align*}
h_k \cdot e_{ij} \cdot f -& e_{ij} \cdot h_k \cdot  f\\
=&h_k(p_{ij}f+x_if^j)-e_{ij}(p_{kk}f+x_kf^k)\\
=&\quad \: p_k(p_{ij}f+x_if^j)+x_k(p_{ij}^kf+p_{ij}f^k+x_if^{jk}+\delta{ik}f^j)\\
 &-p_{ij}(p_{kk}f+x_kf^k)-x_i(p_{kk}^jf+p_{kk}f^j+x_kf^{kj}+\delta_{kj}f^k)\\
=&(x_kp_{ij}^k-x_ip_{kk}^j)f+x_k\delta_{ik}f^j-x_i\delta_{kj}f^k.
\end{align*}
On the other hand, \[[h_k,e_{ij}]\cdot f = (\delta_{ki}e_{kj}-\delta_{kj}e_{ik}) \cdot f 
= (\delta_{ki}-\delta_{kj})e_{ij}\cdot f =(\delta_{ki}-\delta_{kj})(p_{ij}f+x_if^j)\]
So the conditions $[h_k,e_{ij}]\cdot f=h_k \cdot e_{ij} \cdot f - e_{ij} \cdot h_k \cdot f$ translates to the conditions
\begin{equation}
\label{pij}
x_kp_{ij}^k-x_ip_{kk}^j=(\delta_{ki}-\delta_{kj})p_{ij} \quad 1\leq i,j,k\leq n.
\end{equation}
Note that by Equation~\eqref{pii}, the above equality also holds when $i=j$.

Now for $1\leq i,j,k,l \leq n$ where $i\neq j$ and $k\neq l$ we have
\begin{align*}
e_{ij} \cdot e_{kl} &\cdot f -  e_{kl}\cdot e_{ij} \cdot f\\
  =&e_{ij}\cdot (p_{kl}f+x_kf^l)-e_{kl}\cdot (p_{ij}f+x_if^j)\\
  =& \quad \; p_{ij}(p_{kl}f+x_kf^l)+x_i(p_{kl}^jf+p_{kl}f^j+\delta_{kj}f^l+x_kf^{lj})\\
   &-p_{kl}(p_{ij}f+x_if^j)-x_k(p_{ij}^lf+p_{ij}f^l+\delta_{il}f^j+x_if^{jl})\\
  =&(x_ip_{kl}^j-x_kp_{ij}^l)f+\delta_{kj}x_if^l-\delta_{il}x_kf^j
\end{align*}
while \[[e_{ij},e_{kl}]\cdot f = \delta_{jk}e_{il}\cdot f-\delta_{il}e_{kj}\cdot f = \delta_{jk}(p_{il}f+x_if^l)-\delta_{il}(p_{kj}+x_kf^j),\]
So the condition $[e_{ij},e_{kl}]\cdot f=e_{ij}\cdot e_{kl}\cdot f-e_{kl}\cdot e_{ij}\cdot f$ for all $f$ is equivalent to
\begin{equation}
\label{pij3}
x_ip_{kl}^j-x_kp_{ij}^l=\delta_{kj}p_{il}-\delta_{il}p_{kj}
\end{equation}

\begin{lemma}
\label{qlemma}
We have
\[q_i=-\frac{1}{x_i}\int \sum_{r=1}^n (p_{ii}^rp_{ri}+x_rp_{ii}^{ir}+p_{ii}^{i}p_{rr}) dx_i=-\frac{1}{x_i}d_i'\big(x_i\sum_{r=1}^n (p_{ii}^rp_{ri}+x_rp_{ii}^{ir}+p_{ii}^{i}p_{rr})\big)\]
\end{lemma}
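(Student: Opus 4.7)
The strategy is to derive $q_i$ from the bracket relation $[h_i, e_{n+1,i}] = -e_{n+1,i}$; this holds in $\fr{sl}_{n+1}$ since $[e_{ii}, e_{n+1,i}] = -e_{n+1,i}$ and the scalar correction $-\tfrac{1}{n+1}I$ in $h_i$ commutes with everything. First I would expand the identity
\[ h_i \cdot (e_{n+1,i} \cdot f) - e_{n+1,i} \cdot (h_i \cdot f) + e_{n+1,i} \cdot f = 0 \]
using Lemma~\ref{act} and then extract the coefficient of $f$ (the part with no derivatives of $f$).

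Carrying out this expansion: writing $g = e_{n+1,i} \cdot f$, the coefficient of $f$ in $h_i \cdot g = p_{ii} g + x_i g^i$ is $p_{ii} q_i + x_i q_i^i$. For the other term, setting $\tilde g = p_{ii} f + x_i f^i = h_i \cdot f$ and reading off the $f$-coefficient from each of the pieces $q_i \tilde g$, $-\sum_r p_{ri} \tilde g^r$, $-\sum_r p_{rr} \tilde g^i$, and $-\sum_r x_r \tilde g^{ir}$ yields $p_{ii} q_i - \sum_r p_{ii}^r p_{ri} - p_{ii}^i \ol{p} - \sum_r x_r p_{ii}^{ir}$ where $\ol{p} = \sum_r p_{rr}$. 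Adding the extra $q_i$ contributed by $e_{n+1,i} \cdot f$ and observing that the $p_{ii} q_i$ cross-terms cancel, the constraint simplifies to
\[ q_i + x_i q_i^i = -\sum_{r=1}^n \big(p_{ii}^r p_{ri} + x_r p_{ii}^{ir} + p_{ii}^i p_{rr}\big). \]

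Recognizing the left side as $\partial_i(x_i q_i)$, I multiply through by $x_i$ to obtain $d_i(x_i q_i) = -x_i \sum_r (p_{ii}^r p_{ri} + x_r p_{ii}^{ir} + p_{ii}^i p_{rr})$. Since $q_i \in \pol$, the polynomial $x_i q_i$ vanishes at $x_i = 0$, so by Lemma~\ref{dlemma} we have $d_i' \circ d_i(x_i q_i) = x_i q_i$. Applying $d_i'$ and dividing by $x_i$ (legitimate since the right side is divisible by $x_i$, as it vanishes at $x_i = 0$) yields the stated formula $q_i = -\tfrac{1}{x_i} d_i'\bigl(x_i \sum_r (p_{ii}^r p_{ri} + x_r p_{ii}^{ir} + p_{ii}^i p_{rr})\bigr)$; the integral form is the standard interpretation of $\tfrac{1}{x_i} d_i'(x_i \,\cdot\,)$ as $\tfrac{1}{x_i} \int_0^{x_i} (\,\cdot\,) \, dx_i$.

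The main obstacle will be the bookkeeping in the expansion: one must meticulously separate contributions to the coefficient of $f$ from those to the coefficients of $f^r$, $f^i$, $f^{ir}$, etc., and check that the $p_{ii} q_i$ cross-terms cancel exactly. The consistency of the higher-derivative coefficients is a separate verification (needed for actually producing a module structure) and should follow from the relations already established in Equations~\eqref{pii}--\eqref{pij3}.
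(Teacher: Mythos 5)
Your proposal is correct and follows essentially the same route as the paper: the paper expands $[h_k,e_{n+1,i}]\cdot f=h_k\cdot e_{n+1,i}\cdot f-e_{n+1,i}\cdot h_k\cdot f$ for general $k$ to get $x_kq_i^k+\delta_{ki}q_i+\sum_r(p_{kk}^rp_{ri}+x_rp_{kk}^{ir}+p_{kk}^ip_{rr})=0$ and then sets $k=i$, which is exactly your coefficient-of-$f$ identity, before integrating $\partial_{x_i}(x_iq_i)$ with vanishing constant of integration. Your version is a slight streamlining in that you specialize to $k=i$ from the outset and only extract the zeroth-order coefficient (legitimately, e.g.\ by evaluating at $f=1$), whereas the paper also verifies that the first-order coefficients cancel via Equations~\eqref{pii} and~\eqref{pij}; this extra check is not needed for the lemma itself.
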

\begin{proof}
Since $[h_k,e_{n+1,i}]\cdot f=h_k \cdot e_{n+1,i} \cdot f-e_{n+1,i} \cdot h_k \cdot f$ for all $f$, we have
\begin{align*}
0=&h_k \cdot e_{n+1,i} \cdot f-e_{n+1,i} \cdot h_k \cdot f -[h_k,e_{n+1,i}]\cdot f=\\
=& \; h_k\cdot \big(q_if-\sum_r(p_{ri}f^r+p_{rr}f^i+x_rf^{ir})\big)\\
 &-e_{n+1,i}\cdot \big(p_{kk}f+x_kf^{k}\big) + \delta_{ki}e_{n+1,i} \cdot f\\
 =&p_{kk}\big(q_if-\sum_r(p_{ri}f^r+p_{rr}f^i+x_rf^{ir})\big)\\
 &+ x_k\big(q_i^kf+q_if^k-\sum_r(p_{ri}^kf^r+p_{ri}f^{rk}+p_{rr}^{k}f^{i}+p_{rr}f^{ik}+\delta_{rk}f^{ir}+x_rf^{irk})\big)\\
 &-q_i\big(p_{kk}f+x_kf^{k}\big)+\sum_r p_{ri}\big(p_{kk}^{r}f+p_{kk}f^{r}+\delta_{kr}f^{k}+x_kf^{kr}\big)\\
 &+\sum_r p_{rr}\big(p_{kk}^if+p_{kk}f^i+\delta_{ki}f^{k}+x_kf^{ki}\big)\\
 &+\sum_r x_r\big(p_{kk}^{ri}f+p_{kk}^{i}f^r+p_{kk}^rf^i+p_{kk}f^{ir}+\delta_{ki}f^{kr}+\delta_{kr}f^{ki}+x_kf^{irk}\big)\\
 &+\delta_{ki}\big(q_if-\sum_r(p_{ri}f^r+p_{rr}f^i+x_rf^{ir})\big)\\
=& \; \Big(x_kq_i^k+\delta_{ki}q_i +\sum_{r=1}^n (p_{kk}^rp_{ri}+x_rp_{kk}^{ir}+p_{kk}^{i}p_{rr}) \Big)f\\
 &+\Big(\sum_r (x_rp_{kk}^r-x_kp_{rr}^k)\Big)f^i+\sum_{r}\Big(x_rp_{kk}^i-x_kp_{ri}^k-\delta_{ki}p_{ri}\Big)f^r+p_{ki}f^k\\
 =& \Big(x_kq_i^k+\delta_{ki}q_i +\sum_{r=1}^n (p_{kk}^rp_{ri}+x_rp_{kk}^{ir}+p_{kk}^{i}p_{rr}) \Big)f
\end{align*}
where the last equality followed by using equations~\eqref{pii} and~\eqref{pij}. Since the above equality holds for all $f$ we have 
\begin{equation}
x_kq_i^k+\delta_{ki}q_i +\sum_{r=1}^n (p_{kk}^rp_{ri}+x_rp_{kk}^{ir}+p_{kk}^{i}p_{rr})=0 \label{relhq}
\end{equation}
Taking $k=i$ we obtain
\begin{align*}
x_iq_i^i+q_i &+\sum_{r=1}^n (p_{ii}^rp_{ri}+x_rp_{ii}^{ir}+p_{ii}^{i}p_{rr})=0\\
&\Leftrightarrow \dd{x_i}(x_iq_i)=-\sum_{r=1}^n (p_{ii}^rp_{ri}+x_rp_{ii}^{ir}+p_{ii}^{i}p_{rr})\\
&\Leftrightarrow q_i=-\frac{1}{x_i} \int \sum_{r=1}^n (p_{ii}^rp_{ri}+x_rp_{ii}^{ir}+p_{ii}^{i}p_{rr})dx_i\\
& \quad = -\frac{1}{x_i} d_i'\big(x_i\sum_{r=1}^n (p_{ii}^rp_{ri}+x_rp_{ii}^{ir}+p_{ii}^{i}p_{rr})\big).
\end{align*}
Note that since $q_i$ is a polynomial the integral above is well-defined since it needs to be divisible by $x_i$. 
\end{proof}

\begin{thm}
\label{mainthm}
Fix a polynomial $p\in \pol$ and for $1\leq i,j \leq n$ define polynomials $p_{ij}:=x_i\frac{\partial p}{\partial x_j}+\delta_{ij}p(0)/n$ and $q_i:=-\frac{1}{x_i} \int \sum_{r=1}^n (p_{ii}^rp_{ri}+x_rp_{ii}^{ir}+p_{ii}^{i}p_{rr})dx_i$.

Then the following action defines an $\fr{sl}_{n+1}$-module structure on the space $M(p)=\pol$:
\begin{align*}
e_{i,n+1}\cdot f&=x_if\\
h_i\cdot f&=fp_{ii} + x_if^i\\
e_{ij}\cdot f&=fp_{ij}+x_if^j\\
e_{n+1,i}\cdot f&=q_if-\sum_r(p_{ri}f^r+p_{rr}f^i+x_rf^{ir})
\end{align*}
where $h_i=e_{ii}-\frac{1}{n+1}\sum_{j=1}^{n+1}e_{jj}$.

Moreover, any $\fr{sl}_{n+1}$-module $M$ for which $\Res_{U(\fr{n})}^{U(\fr{sl}_{n+1})}M$ is free of rank $1$ is isomorphic to $M(p)$ for a unique $p\in \pol$.
\end{thm}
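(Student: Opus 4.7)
The theorem has two halves: (i) for each polynomial $p\in\pol$, the displayed formulas yield a well-defined $\fr{sl}_{n+1}$-module structure on $\pol$, and (ii) every $\fr{sl}_{n+1}$-module free of rank $1$ over $U(\fr{n})$ arises in this way from a unique $p$. The plan is to extract (ii) by assembling the preparatory results of this section, then establish (i) by verifying the bracket relations not already handled, and finally deduce uniqueness of $p$ as in the $\fr{sl}_2$ setting.

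For the classification direction (ii), I would begin by replacing $M$ with the isomorphic $U(\fr{n})$-module $\pol$ on which $e_{i,n+1}\cdot f=x_if$; Lemma~\ref{act} then says the full action is encoded in polynomials $p_{ij}$ and $q_i$. The constraint \eqref{pii} from $[h_i,h_j]=0$ gives $d(p_{ii})=x_i\ol{p}^i$, and Lemma~\ref{dlemma} yields $p_{ii}=d'(x_i\ol{p}^i)+c_i$ for scalars $c_i$; setting $p:=d'(\ol{p})$, a short manipulation with $d$ and $d'$ shows $p_{ii}=x_ip^i+c_i$ and $\sum_i c_i=p(0)$. Feeding this into \eqref{pij} and combining the cases $k=i$, $k=j$, and $k\notin\{i,j\}$ pins down $p_{ij}=x_ip^j$ for $i\neq j$. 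The key additional input comes from \eqref{pij3}: specializing to $k=j$ and $l=i$ (with any fixed $i\neq j$) produces, after substitution of the forms just derived, a residual equation $c_i-c_j=0$, forcing all $c_i$ equal and hence $c_i=p(0)/n$. The formula for $q_i$ is then exactly the one produced by Lemma~\ref{qlemma}.

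For the existence direction (i), I would check the remaining bracket relations of $\fr{sl}_{n+1}$ on the proposed action. The relations \eqref{pii}, \eqref{pij}, \eqref{pij3} and the $k=i$ case of the $[h_k,e_{n+1,i}]$-relation \eqref{relhq} hold by direct substitution of $p_{ij}=x_ip^j+\delta_{ij}p(0)/n$ and the explicit $q_i$. The brackets $[h_k,e_{i,n+1}]$, $[e_{ij},e_{k,n+1}]$ and $[e_{i,n+1},e_{j,n+1}]$ are straightforward via Corollary~\ref{Ucor}. What genuinely remains are $[e_{ij},e_{n+1,k}]$, $[e_{i,n+1},e_{n+1,j}]$, the $k\neq i$ cases of \eqref{relhq}, and $[e_{n+1,i},e_{n+1,j}]=0$ for $i\neq j$. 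I expect the last of these to be the main obstacle: $q_i$ was defined via only one specialization of the $[h_k,e_{n+1,i}]$ relation, and checking $[e_{n+1,i},e_{n+1,j}]\cdot f=0$ for all $f$ requires expanding both sides in terms of $p$ and using the defining identity $\dd{x_i}(x_iq_i)=-\sum_r(p_{ii}^rp_{ri}+x_rp_{ii}^{ir}+p_{ii}^ip_{rr})$ together with its $i\leftrightarrow j$ counterpart, then invoking Lemma~\ref{dlemma} and the symmetry $x_ip^{ij}=x_jp^{ji}$ to force every $p$-dependent term to cancel.

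Uniqueness of $p$ proceeds just as in Proposition~\ref{uniprop}: any isomorphism $\varphi:M(p)\to M(\ol{p})$ is automatically $U(\fr{n})$-linear, hence of the form $\varphi(f)=c\,f$ for some $c\in\K^\times$, and comparing $\varphi(h_i\cdot 1)=h_i\cdot\varphi(1)$ forces $p_{ii}=\ol{p}_{ii}$ for every $i$. Summing over $i$ and applying $d'$ yields $p=\ol{p}$.
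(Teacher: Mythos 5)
Your proposal is correct and follows essentially the same route as the paper: the classification half extracts $p_{ii}=x_ip^i+c_i$ from \eqref{pii}, $p_{ij}=x_ip^j$ from \eqref{pij}, the equality of the constants $c_i=p(0)/n$ from the specialization \eqref{pij2} of \eqref{pij3}, and $q_i$ from Lemma~\ref{qlemma}, while the existence half reduces to the same residual bracket identities (the $k\neq i$ case of \eqref{relhq}, $[e_{n+1,i},e_{jk}]$, and $[e_{n+1,i},e_{n+1,j}]=0$) that the paper also leaves as omitted long computations. One small caution: the ``symmetry'' $x_ip^{ij}=x_jp^{ji}$ you invoke in the sketch of that omitted verification is false as written (only $p^{ij}=p^{ji}$ holds), so the actual cancellation has to be organized differently, but this does not affect the overall argument since it lies inside the computation the paper itself does not print.
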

\begin{proof}
We first verify that the definition in the theorem indeed gives an $\fr{sl}_{n+1}$-module structure.
Lemma~\ref{Ucor} guarantees that for any $y\in \fr{sl}_{n+1}$ we have $[y,x_k]\cdot f = y\cdot x_k \cdot f - x_k\cdot y \cdot f$.

By equations~\eqref{pii},~\eqref{pij}, \eqref{pij3}, and~\eqref{pij2} we see that the relations
\begin{align*}
[h_i,h_j]\cdot f&=h_i\cdot h_j \cdot f - h_j\cdot h_i \cdot f\\
[e_{ij},h_k]\cdot f&=e_{ij}\cdot h_k \cdot f - h_k\cdot e_{ij} \cdot f\\
[e_{ij},e_{kl}]\cdot f&=e_{ij}\cdot e_{kl} \cdot f - e_{kl}\cdot e_{ij} \cdot f\\
\end{align*}
holds for all $f$ if and only if
\[x_ip_{kl}^j-x_kp_{ij}^l=\delta_{kj}p_{il}-\delta_{il}p_{kj}\]
for all $1\leq i,j,k,l \leq n$. We verify these relations for $p_{ij}=x_ip^j+\delta_{ij}c$:

\begin{align*}
x_ip_{kl}^j-x_kp_{ij}^l=&x_i(x_kp^l)^j-x_k(x_ip^j)^l\\
	&=x_ix_kp^{lj}+\delta_{kj}x_ip^l-x_kx_ip^{jl}-\delta_{il}x_kp^j\\
	&=\delta_{kj}x_ip^l-\delta_{il}x_kp^j\\
	&= \delta_{kj}(p_{il}-c\delta_{il})-\delta_{il}(p_{kj}-c\delta_{kj})\\
	&= \delta_{kj}p_{il}-\delta_{il}p_{kj}.
\end{align*}

It remains only to show that $[e_{n+1,i},y]\cdot f=e_{n+1,i}\cdot y \cdot f - y\cdot e_{n+1,i} \cdot f$ for each $y\in \fr{sl}_{n+1}$.
We verify only the relation for $y=h_k$ here, the remaining relations are similar.

By Lemma~\ref{qlemma}, the condition $[h_k,e_{n+1,i}]\cdot f=h_k \cdot e_{n+1,i} \cdot f-e_{n+1,i} \cdot h_k \cdot f$ for all $f$ is equivalent to 
\begin{equation}
x_kq_i^k+\delta_{ki}q_i +\sum_{r=1}^n (p_{kk}^rp_{ri}+x_rp_{kk}^{ir}+p_{kk}^{i}p_{rr})=0. \label{relhq2}
\end{equation}
For $k=i$, this equation holds by the definition of $q_i$, so assume that $k\neq i$. Then we have 
\begin{align*}
 &x_kq_i^k+\delta_{ki}q_i +\sum_{r=1}^n (p_{kk}^rp_{ri}+x_rp_{kk}^{ir}+p_{kk}^{i}p_{rr})=0\\
 &\Leftrightarrow d_k(d_i(-x_iq_i)) =d_i\big(x_i\sum_{r=1}^n (p_{kk}^rp_{ri}+x_rp_{kk}^{ir}+p_{kk}^{i}p_{rr})\big)\\
 &\Leftrightarrow x_i\sum_{r=1}^n d_k(p_{ii}^rp_{ri}+x_rp_{ii}^{ir}+p_{ii}^{i}p_{rr})) =d_i\big(x_i\sum_{r=1}^n (p_{kk}^rp_{ri}+x_rp_{kk}^{ir}+p_{kk}^{i}p_{rr})\big)\\
 &\Leftrightarrow \sum_{r=1}^n x_k\dd{x_k}(p_{ii}^rp_{ri}+x_rp_{ii}^{ir}+p_{ii}^{i}p_{rr})) =\dd{x_i}\big(x_i\sum_{r=1}^n (p_{kk}^rp_{ri}+x_rp_{kk}^{ir}+p_{kk}^{i}p_{rr})\big)
\end{align*}
Substituting $p_{ij}=x_ip^j+\delta_{ij}p(0)/n$, the verification of the above equality reduces to a simple but long calculation which we omit here.

Finally we note that the conditions
\[[e_{n+1,i},e_{n+1,j}]\cdot f=e_{n+1,i}\cdot e_{n+1,j} \cdot f - e_{n+1,j}\cdot e_{n+1,i} \cdot f\]
\[[e_{n+1,i},e_{jk}]\cdot f=e_{n+1,i}\cdot e_{jk} \cdot f - e_{jk}\cdot e_{n+1,i} \cdot f\]
reduces to the two conditions
\[\sum_r(p_{ri}q_k^r+p_{rr}q_k^i+x_rq_k^{ir})=\sum_r(p_{rk}q_i^r+p_{rr}q_i^k+x_rq_i^{kr})\]
\[x_jq_i^k+\sum_r(p_{ri}p_{jk}^r+p_{rr}p_{jk}^i+x_rp_{jk}^{ir})=0\]
which can be verified similarly.

Next we adress the uniqueness claim of the theorem. Suppose that we are given an $\fr{sl}_{n+1}$-module structure on $M=\pol$ such that
$e_{i,n+1}\cdot f = x_if$ for $1 \leq i \leq n$. Define $p_{ij}:=e_{i,j}\cdot 1$ and $q_i:=e_{n+1,i}\cdot 1$ as before. By Lemma~\ref{act}, $M$ is determined by these polynomials up to isomorphism. We need to show that $M\simeq M(p)$ for some $p\in \pol$.

Equation~\eqref{pii} says that $x_ip_{jj}^i=x_jp_{ii}^j$ for all $1\leq i,j \leq n$. Summing over $j$ we obtain
$x_i\ol{p}^i=d(p_{ii})$ and applying $d'$ we have
\[d'(x_i\ol{p}^i)=d'\circ d(p_{ii})=p_{ii}-p_{ii}(0,\ldots,0)\]
 Thus each $p_{ii}$ is determined up to addition of a constant by $\ol{p}$.
We conclude that \[p_{ii}=d'(x_i\ol{p}^i)+c_i\]
for some constants $c_i$.

Next recall that by Equation~\ref{pij3} we had $x_ip_{kl}^j-x_kp_{ij}^l=\delta_{kj}p_{il}-\delta_{il}p_{kj}$.
Taking $k=i$ and $j=l$ here we obtain
\begin{equation}
\label{pij2}
x_ip_{ji}^j-x_jp_{ij}^i=p_{ii}-p_{jj}.
\end{equation}
Substituting $p_{ii}=d'(x_i\ol{p}^i)+c_i$ in~\eqref{pij2} and considering the constant term it follows that $c_i=c_j$ for all $i,j$ and
\[p_{ii}=d'(x_i\ol{p}^i)+c=x_i\tfrac{\partial}{\partial x_i}d'(\ol{p})+c.\]
Define $p:=d'(\ol{p})$. Then $p_{ii}=x_ip^i+c$, and $\ol{p}(0)=\sum_{i=1}^n p_{ii}(0) = nc$, so 
\[c=\frac{\ol{p}(0)}{n}=\frac{d'(\ol{p})(0)}{n}=\frac{p(0)}{n}.\] 

Finally, recall that by~\eqref{pij} we had $x_kp_{ij}^k-x_ip_{kk}^j=(\delta_{ki}-\delta_{kj})p_{ij}$.
Summing over $1\leq k \leq n$ in we get
\[d(p_{ij})-x_i\ol{p}^j=(\sum_k\delta_{ki}-\sum_k\delta_{kj})p_{ij}=(1-1)p_{ij}=0.\]
Thus $p_{ij}-p_{ij}(0) = d'\circ d (p_{ij})=d'(x_i\ol{p}^j)$.
Taking $k=i$ in \eqref{pij} we get $x_ip_{ij}^i-x_ip_{ii}^j=p_{ij}$ which shows that $p_{ij}(0)=0$, and we conclude that
\[p_{ij}=d'(x_i\ol{p}^j)=x_ip^j.\]

Thus we have shown that the $p_{ij}$ defined above are the same as those in $M(p)$. Finally, by Lemma~\ref{qlemma} each $q_i$ is uniquely determined by the $p_{ij}$. We have therefore proved that $M\simeq M(p)$, which shows that the $\fr{sl}_{n+1}$ modules of form $M(p)$ exhaust the set of modules which are free when restricted to $U(\fr{n})$.
\end{proof}

\subsection{Submodule structure}
\label{sec:sub}
\begin{prop}
We have $M(p) \simeq M(\tilde{p})$ if and only if $p=\tilde{p}$.
\end{prop}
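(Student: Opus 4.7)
The $(\Leftarrow)$ direction is immediate. For the other direction, my plan mirrors the strategy used in Proposition~\ref{uniprop} for the $\fr{sl}_2$ case and exploits the rigidity of rank $1$ free modules over the polynomial ring $U(\fr{n})$.

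First I would let $\varphi : M(p) \to M(\tilde p)$ be any $\fr{sl}_{n+1}$-module isomorphism. Forgetting down to $U(\fr{n})$, both modules are the rank $1$ free $\pol$-module where each $e_{i,n+1}$ acts by multiplication by $x_i$, so $\varphi$ becomes a $\pol$-linear automorphism of $\pol$. Such an automorphism is multiplication by $\varphi(1)$, and surjectivity forces $\varphi(1)$ to be a unit of $\pol$, hence a nonzero constant $c \in \K^{\times}$. Thus $\varphi(f) = c f$ for every $f \in \pol$.

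Next I would exploit the $h_i$ (equivalently $e_{ij}$) action to extract the polynomials $p_{ij}$ themselves. Evaluating the identity $\varphi(h_i \cdot 1) = h_i \cdot \varphi(1)$ using the formulas of Theorem~\ref{mainthm} gives $c \, p_{ii} = c\, \tilde p_{ii}$, hence $p_{ii} = \tilde p_{ii}$ for every $i$. Substituting the explicit formula $p_{ii} = x_i\, \partial p/\partial x_i + p(\mathbf 0)/n$, I would first specialise $x_i = 0$ to deduce $p(\mathbf 0) = \tilde p(\mathbf 0)$, and then cancel this constant term to obtain $x_i\, \partial p/\partial x_i = x_i\, \partial \tilde p/\partial x_i$, so that $\partial p/\partial x_i = \partial \tilde p/\partial x_i$ for every $i$. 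A polynomial is determined by its partial derivatives together with its value at the origin, so $p = \tilde p$.

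There is no real obstacle here; the key observation is simply that the polynomial $p$ is recoverable from the scalar datum $h_i \cdot 1$, and that a $U(\fr{n})$-module automorphism of the free rank $1$ module is forced to be a constant scalar. One could alternatively apply the uniqueness clause of Theorem~\ref{mainthm} directly, since $c^{-1}\varphi$ realises $M(p)$ and $M(\tilde p)$ as the same module structure on $\pol$ with $e_{i,n+1}$ acting by multiplication, and that theorem already asserts the associated polynomial is unique.
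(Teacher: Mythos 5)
Your proposal is correct and follows essentially the same route as the paper: observe that $\varphi$ is multiplication by a nonzero constant, then compare the action of the Cartan/upper-triangular generators on $1$ to recover $p$. The only cosmetic difference is that you extract both the partial derivatives of $p$ and its constant term from the $h_i$ alone, whereas the paper uses the $e_{ij}$ ($i\neq j$) for the derivatives and the $h_j$ for the constant term; both work.
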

\begin{proof}
Let $\varphi: M(p) \ra M(\tilde{p})$ be an isomorphism. Since $\varphi(f)=\varphi(f\cdot 1)= f\cdot \varphi(1)$,  $\varphi(1)$ is a nonzero constant. For $i\neq j$ we have $\varphi(e_{ij}\cdot 1)=e_{ij}\cdot \varphi(1)$, which implies $x_ip^j = x_i\tilde{p}^j$ and $p^j=\tilde{p}^j$. Since this holds for each $j$, $p-\tilde{p}$ is a constant. Finally, $\varphi(h_j\cdot 1)=h_j\cdot \varphi(1)$ implies 
$x_jp^j+p(0)/n=x_j\tilde{p}^j+\tilde{p}(0)/n$ and in turn $p(0)=\tilde{p}(0)$. Thus $p=\tilde{p}$.
\end{proof}

\begin{lemma}
Every submodule $N$ of $M(p)$ is a homogeneous ideal of form \[N=\{f\in M(p) | \deg(f)\geq k\}\] for some $k\in \bb{N}$.
\end{lemma}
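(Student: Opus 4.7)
The plan is to exhibit inside $U(\fr{sl}_{n+1})$ enough operators to see the polynomial-ring structure on $M(p)$ directly. Interpreting the polynomials $p_{ii}, p_{ij}$ from Theorem~\ref{mainthm} as elements of $U(\fr{n})$ via $x_k\leftrightarrow e_{k,n+1}$, the elements
\[H_i := h_i - p_{ii}, \qquad E_{ij} := e_{ij} - p_{ij}\quad(i\neq j)\]
lie in $U(\fr{sl}_{n+1})$ and, by the explicit formulas of Theorem~\ref{mainthm}, act on any $f\in M(p)$ simply as $x_i\tfrac{\partial f}{\partial x_j}$. Together with the multiplication operators $x_i = e_{i,n+1}$, these are everything I will need.

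First I would show that $N$ is homogeneous. The operator $D:=\sum_{i=1}^n H_i$ acts on $M(p)$ as the Euler operator $f\mapsto d(f)=\sum_i x_i f^i$, which is diagonalizable with eigenspaces the degree components $M(p)_m$ of homogeneous polynomials. Any submodule $N$ is stable under $D$ and therefore decomposes as $N=\bigoplus_m N_m$ with $N_m := N\cap M(p)_m$.

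Next I would invoke irreducibility of symmetric powers. The family $\{H_i,E_{ij}\}_{1\leq i,j\leq n}$ acts on each component $M(p)_m$ exactly as the standard $\fr{gl}_n$-action $e_{ij}\mapsto x_i\partial_j$ on $S^m(\K^n)\cong M(p)_m$ under $e_i\leftrightarrow x_i$. Since $S^m(\K^n)$ is irreducible over $\fr{gl}_n$, each $N_m$ is either $0$ or the full component $M(p)_m$.

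To conclude, assume $N\neq 0$ and set $k:=\min\{m : N_m\neq 0\}$. Multiplication by $x_i$ is implemented by $e_{i,n+1}\in U(\fr{sl}_{n+1})$, and since every monomial of degree $m+1$ equals $x_i$ times a monomial of degree $m$ for some $i$, a straightforward induction yields $N_m = M(p)_m$ for all $m\geq k$; conversely $N_m = 0$ for $m<k$ by the choice of $k$. Hence $N = \{f\in M(p) : \deg f \geq k\}$. The only non-routine step is spotting the correct modifications $H_i$ and $E_{ij}$, after which the argument reduces to the classical fact that $\fr{gl}_n$ acts irreducibly on each $S^m(\K^n)$.
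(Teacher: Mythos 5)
Your proof is correct and follows essentially the same route as the paper: the paper likewise works with the shifted operators $h_i-p_{ii}$ and $e_{ij}-p_{ij}$ (acting as $x_i\partial_i$ and $x_i\partial_j$), isolates homogeneous components via degree operators, moves around within a fixed degree, and climbs up in degree using the $e_{i,n+1}$. The only cosmetic difference is that you package the within-degree step as the irreducibility of $S^m(\K^n)$ over $\fr{gl}_n$, where the paper instead spells out the monomial-to-monomial moves explicitly.
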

\begin{proof}
Let $N$ be a submodule of $M(p)$. Freeness of $M(p)$ over $\pol$ implies that any submodule is an ideal.
Note that $D_i:=(h_i-x_ip^i-p(0)/n)\in U(\fr{sl}_{n+1})$ acts as the $i$-degree operator on $M(p)$, and $n-D_i$ acts on $f$ by killing all terms of $f$ which has $i$-degree $n$. Thus any $f\in N$ can be reduced to each of its monomial terms. Next, let $f \in N$.
Then for $i\neq j$, $(e_{ij}-x_ip^j)\cdot f=x_if^j$ which shows that $f$ can be mapped to any polynomial of the same degree by a product of such elements in $U(\fr{sl}_{n+1})$. Thus $N \supset \{g \:|\: d(g)=d(f)\}$, and since $\K[x]$ acts freely on $N$, we have $N \supset \{g \:|\: d(g)\geq d(f)\}$. Therefore if $f\in N$ is taken to have minimal degree we see that $N$ has the form stated in the lemma.
\end{proof}

\begin{thm}$\:$\\
$M(p)$ is a simple $\fr{sl}_{n+1}$-module if and only if $k:=-\frac{n+1}{n}p(0,\ldots, 0) \not\in \bb{N}_+$. Otherwise, if $k\in \bb{N}_+$, the module $M(p)$ has length $2$ and the top of $M$ is a simple highest weight module.
\end{thm}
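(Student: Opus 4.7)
By the preceding lemma every submodule of $M(p)$ has the form $N_\ell := \{f\in M(p):\deg f\geq \ell\}$ for some $\ell\in\bb{N}$, so the plan is to determine precisely for which $\ell\geq 1$ this subset is actually $\fr{sl}_{n+1}$-stable. Inspection of the formulas in Lemma~\ref{act} shows that the operators $e_{i,n+1}$, $h_i$, and $e_{ij}$ (with $i\neq j$) all preserve or increase the total degree, so the only generator that can drop a homogeneous element of degree $\ell$ into degree $\ell-1$ is $e_{n+1,i}$. Thus $N_\ell$ is a submodule iff for every homogeneous $f$ of degree $\ell$ and every $i$, the degree-$(\ell-1)$ part of $e_{n+1,i}\cdot f$ vanishes.

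The key step is extracting that degree-$(\ell-1)$ component from $e_{n+1,i}\cdot f = q_if-\sum_r(p_{ri}f^r+p_{rr}f^i+x_rf^{ir})$. Using $p_{rs}(0,\ldots,0)=\delta_{rs}p(0,\ldots,0)/n$, in degree $\ell-1$ the sum $\sum_r p_{ri}f^r$ contributes $\tfrac{p(0,\ldots,0)}{n}f^i$ and $\sum_r p_{rr}f^i$ contributes $p(0,\ldots,0)f^i$; meanwhile $\sum_r x_rf^{ir}=d(f^i)=(\ell-1)f^i$ is already entirely of degree $\ell-1$, and $q_if$ has no terms of degree below $\ell$. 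Collecting these yields
\[-\Bigl(\tfrac{(n+1)p(0,\ldots,0)}{n}+\ell-1\Bigr)f^i,\]
which (by choosing e.g.\ $f=x_i^\ell$) must vanish. This forces $\ell = k+1$ where $k = -\tfrac{n+1}{n}p(0,\ldots,0)$, so $M(p)$ is simple exactly when $k+1$ is not a positive integer, and otherwise the unique proper nontrivial submodule is $N_{k+1}$ and $M(p)$ has length $2$.

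For the quotient $M(p)/N_{k+1}$ I take $v := x_1^k$ as the candidate highest weight vector for the upper-triangular Borel: $e_{i,n+1}\cdot v = x_iv$ is of degree $k+1$ and so vanishes in the quotient, and for $1\leq i<j\leq n$ one has $v^j=0$ and $e_{ij}\cdot v = p_{ij}v\in N_{k+1}$ since $p_{ij}$ has no constant term when $i\neq j$. Simplicity of the quotient is automatic, for any proper submodule would lift to some $N_\ell$ with $1\leq\ell<k+1$, contradicting the preceding paragraph. Hence the quotient is a simple module generated by the highest weight vector $v$, so it is a simple highest weight module.

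The main obstacle is the explicit degree-$(\ell-1)$ calculation of paragraph two: four contributions must be assembled correctly and one has to verify $q_if$ contributes nothing below degree $\ell$. The remaining weight computation $h_j\cdot v = (p(0,\ldots,0)/n+\delta_{j,1}k)v$ (which identifies the top as the simple module $L(k\omega_1)$) and the sanity check against the $\fr{sl}_2$ specialization of Section~\ref{sec:sl2} are then routine.
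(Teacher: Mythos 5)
Your argument is essentially the paper's own: the only generator that can lower degree is $e_{n+1,i}$, and the heart of the matter is exactly the coefficient $-\bigl(\tfrac{n+1}{n}p(0,\ldots,0)+\ell-1\bigr)$ you extract in front of $f^i$; the paper obtains the same expression by applying $q_i-e_{n+1,i}$ to a minimal-degree monomial of an arbitrary submodule, while you run the computation over all candidate ideals $N_\ell$ supplied by the preceding lemma, which gives both the simplicity criterion and the uniqueness of the submodule in one pass. Your verification that $x_1^k$ is a highest weight vector in the quotient, and the identification of the weight, are also fine. One point deserves attention: your computation yields ``$N_\ell$ is a submodule iff $\ell=k+1$,'' hence reducibility exactly when $k+1\in\bb{N}_+$, i.e.\ when $k\in\bb{N}$ \emph{including} $k=0$; the theorem as stated excludes $k=0$ by writing $k\notin\bb{N}_+$. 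Your version is the one consistent with the computation (for $p(0,\ldots,0)=0$ the augmentation ideal $N_1$ is visibly invariant, with trivial one-dimensional quotient) and with the $\fr{sl}_2$ case of Section~\ref{sec:sl2}, where the criterion is $p(0)\notin-\bb{N}$ with $\bb{N}$ the nonnegative integers. So you have not proved the statement literally as printed, but the discrepancy is an off-by-one in the statement (and in the paper's indexing of $W_k$ versus the surviving vector $x_1^k$), not a flaw in your argument; your indexing $N_{k+1}$ with top generated by $x_1^k$ is the internally consistent one.
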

\begin{proof}
Let $N\subset M(p)$ be a nonzero proper submodule, and $f\in N$ be a monomial with $\deg f$ minimal.
Then \[\big(q_i-e_{n+1,i}\big)\cdot f=\sum_r(x_rp^i+\delta_{ri}p(0)/n)f^r+(x_rp^r+p(0)/n)f^i+x_rf^{ir}.\]
Subtracting terms of degree $\geq \deg(f)$ we obtain
\[(p(0)/n)f^i+n(p(0)/n)f^i+d(f^i)=(\tfrac{n+1}{n}p(0)+\deg(f^i))f^i\in N.\]
But by the minimality of $\deg(f)$, the coefficient of $f^i$ must be zero and we have $\deg(f^i)=-\frac{n+1}{n}p(0)$. This is a contradiction if $k\not\in \bb{N}_+$, so $M(p)$ is simple in this case. Conversely, assume $k\in \bb{N}_+$ and let
\[W_k:= \mathrm{span}\{x_1^{a_1}\cdots x_n^{a_n} \in M(p) | \sum a_{i} \geq k\}.\]
Then $W_k$ is invariant under the action of each $e_{n+1,i}$ by the above calculation. $W_k$ is also invariant under the remaining basis elements of $\fr{sl}_{n+1}$ because $\deg(x_if^j)=\deg(f)$ for all $1\leq i,j \leq n$. Thus $W_k$ is the unique submodule of $M(p)$. This means that $M(p)/W_k$ is simple finite-dimensional module, and therefore a weight module. We note that the image of $x_1^k$ in the quotient is a highest weight vector since it is annihilated by all $e_{ij}$ for $i<j$. Since 
\[h_i \cdot x_1^k = \delta_{ik}(k+p(0)/n)x_1^k=\delta_{ik}\tfrac{n+2}{n}p(0)x_1^k,\]
we have $M(p)/W_k \simeq L(\lambda)$ where $\lambda \in \fr{h}^*$ is given by $\lambda(h_i)=\delta_{ik}\frac{n+2}{n}p(0)$. We note that $\dim M(p)/W_k = {k+n-2 \choose k-1}$, the dimension of the space of polynomials of degree less than $k$.
\end{proof}

\end{document}